\newtheorem{theorem}{Theorem}[section]
\newtheorem{definition}{Definition}[section]
\newtheorem{lemma}{Lemma}[section]
\newtheorem{remark}{Remark}[section]
\newtheorem{assumption}{Assumption}[section]
\newenvironment{proof}[1][Proof]{\textbf{#1.} }{\ \rule{0.5em}{0.5em} \vspace{1ex}}
\newcommand{\real}{\mathbb{R}}
\newtheorem{corol}{Corollary}[section]
 \def\emailname{E-mail}%
\def\email#1{\emailname: #1}
\DeclareMathOperator{\HI}{HI}
\DeclareMathOperator{\Vol}{Vol}
\begin{document}

\title{Worst-case Complexity Bounds of Directional Direct-search Methods for Multiobjective Optimization
}


\author{
A. L. Cust\'{o}dio \thanks{Department of Mathematics,
FCT-UNL-CMA, Campus de Caparica, 2829-516 Caparica, Portugal.
\newline Support for this author was provided by Funda\c c\~ao
para a Ci\^encia e a Tecnologia (Portuguese Foundation for Science
and Technology) under the projects PTDC/MAT-APL/28400/2017 and
UIDB/00297/2020. \email{{\tt alcustodio@fct.unl.pt}}.
}
\and
Y. Diouane\thanks{ISAE-SUPAERO, Universit\'e de Toulouse,
              31055 Toulouse Cedex 4, France. 
              \email{{\tt youssef.diouane@isae.fr}}.
}
\and
R. Garmanjani \thanks{Department of Mathematics, FCT-UNL-CMA, Campus
de Caparica, 2829-516 Caparica, Portugal.
\newline Support for this author was provided by Funda\c c\~ao
para a Ci\^encia e a Tecnologia (Portuguese Foundation for Science
and Technology) under the projects PTDC/MAT-APL/28400/2017 and
UIDB/00297/2020.  \email{{\tt
r.garmanjani@fct.unl.pt}}
}
\and
E. Riccietti \thanks{INP-ENSEEIHT, Universit\'e de Toulouse,
31071 Toulouse Cedex 7, France. \newline Support for this author was provided by TOTAL E\&P.
  \email{{\tt elisa.riccietti@enseeiht.fr}}
  }
}


%

\maketitle

\begin{abstract}
Direct Multisearch is a well-established class of algorithms,
suited for multiobjective derivative-free optimization. In this
work, we analyze the worst-case complexity of this class of
methods in its most general formulation for unconstrained
optimization. Considering nonconvex smooth functions, we show that
to drive a given criticality measure below a specific positive
threshold, Direct Multisearch takes at most a number of iterations
proportional to the square of the inverse of the threshold, raised
to the number of components of the objective function. This number
is also proportional to the size of the set of linked sequences
between the first unsuccessful iteration and the iteration
immediately before the one where the criticality condition is
satisfied. We then focus on a particular instance of Direct
Multisearch, which considers a more strict criterion for accepting
new nondominated points. In this case, we can establish a better
worst-case complexity bound, simply proportional to the square of
the inverse of the threshold, for driving the same criticality
measure below the considered threshold.

\end{abstract}
\bigskip

\begin{center}
\textbf{Keywords:}
Multiobjective unconstrained optimization;  Derivative-free optimization methods; Directional direct-search; Worst-case
complexity; Nonconvex smooth optimization
\end{center}

\section{Introduction}

Multiobjective optimization is a challenging domain in nonlinear
optimization~\cite{RTMarler_JSArora_2004,MEhrgott_2005}, when
there are different conflicting objectives that need to be
optimized. Difficulties increase if derivatives are not available,
neither can be numerically approximated due to the associated
computational cost or to the presence of
noise~\cite{ALCustodio_MEmmerich_JFAMadeira_2012}. We are then in
the domain of multiobjective derivative-free optimization, which
often appears in problems where the objective function is
evaluated through numerical simulation (for complementary
information on single-objective derivative-free optimization
methods,
see~\cite{TGKolda_RMLewis_VTorczon_2003,ARConn_KScheinberg_LNVicente_2009,CAudet_WHare_2017}).

We are interested in establishing worst-case complexity (WCC)
bounds for directional direct-search, a class of derivative-free
optimization methods, when used for solving unconstrained
multiobjective optimization problems. Each iteration of this class
of algorithms can be divided into a search step and a poll step,
being the former optional. In fact, the convergence properties of
these methods rely on the procedure implemented in the poll
step~\cite{ALCustodio_et_al_2011}. The objective function is
evaluated at a finite set of points, corresponding to directions
with good geometrical properties, scaled by a stepsize parameter.
The decision of accepting or rejecting a new evaluated point is
solely based on the objective function value, no model is built
for the objective function, neither any attempt of estimating
derivatives is considered~\cite{ALCustodio_et_al_2011}. The
criterion for accepting a new evaluated point makes use of the
partial order induced by the concept of Pareto dominance (cf.
Definition~\ref{def:pareto_dominance} below).

In the last decades, there has been a growing interest in
evaluating the performance of optimization algorithms in the
worst-case scenario (see, for
instance,~\cite{CCartis_NIMGould_PhLToint_2012,MDodangeh_LNVicente_2014,JFliege_AIFVaz_LNVicente_2019,RGarmanjani_LNVicente_2012,GNGrapiglia_JYuan_YYuan_2015,SGratton_et_al_2015,JKonecny_PRichtarik_2014,YNesterov_2004,LNVicente_2013}).
Usually, the performance of an algorithm is measured by the number
of iterations (or function evaluations) required to drive either
some criticality measure below a given positive threshold or the
function value below the threshold distance to the optimal
function value.

In single-objective nonconvex smooth unconstrained optimization,
Nesterov~\cite[Example 1.2.3]{YNesterov_2004} derived a WCC bound
of $\mathcal{O}\left(\epsilon^{-2}\right)$ for gradient descent
algorithms. A similar bound has been achieved for
trust-region~\cite{SGratton_ASartenaer_PhLToint_2008} and
line-search~\cite{CCartis_PhLSampaio_PhLToint_2015} methods.
Nesterov and Polyak~\cite{YNesterov_BTPolyak_2006} investigated
the use of cubic regularization techniques and then Cartis
\textit{et al}~\cite{CCartis_NIMGould_PhLToint_2011} proposed a
generalization to an adaptive regularized framework using cubics.
For the latter class of methods, by considering second order
algorithmic variants, this bound was improved to
$\mathcal{O}\left(\epsilon^{-3/2}\right)$, including a
derivative-free approach where derivatives are approximated by
finite-differences~\cite{CCartis_NIMGould_PhLToint_2012}.

In the context of single-objective derivative-free optimization,
directional direct-search was the first class of algorithms for
which worst-case complexity bounds were
established~\cite{LNVicente_2013}. The author considered the broad
class of directional direct-search methods which use sufficient
decrease as globalization strategy and established that this class
of algorithms shares, in terms of $\epsilon$, the worst-case
complexity bound of steepest descent for the unconstrained
minimization of a nonconvex smooth function. The complexity of
directional direct-search methods for the optimization of convex
smooth functions has been addressed
in~\cite{MDodangeh_LNVicente_2014}. The bound of
$\mathcal{O}\left(\epsilon^{-2}\right)$ has been improved to
$\mathcal{O}\left(\epsilon^{-1}\right)$, which is identical, in
terms of $\epsilon$, to the one of steepest descent, under
convexity. Complexity results have also been established for the
nonsmooth case. In~\cite{RGarmanjani_LNVicente_2012} a class of
smoothing direct-search methods for the unconstrained optimization
of nonsmooth functions was proposed and it was shown that the
worst-case complexity of this procedure is roughly one order of
magnitude worse than the one for directional direct-search or the
steepest descent method, when applied to smooth functions. Other
types of direct-search methods have been analyzed in the
literature. A probabilistic descent directional direct-search
algorithm has been proposed in~\cite{SGratton_et_al_2015}, which
is characterized by the fact that poll directions only guarantee
descent with a certain fixed probability. The authors establish a
worst-case complexity bound of
$\mathcal{O}\left(\epsilon^{-2}\right)$, which holds with a high
probability. A restricted version of directional direct-search
methods, where no stepsize increase is allowed, along with a
worst-case complexity analysis has also been studied
in~\cite{JKonecny_PRichtarik_2014}. In~\cite{EBergou_et_al_2020},
considering stepsizes independent from the results of each
iteration, the authors propose and analyze a random
derivative-free optimization algorithm which evaluates three
points per iteration and enjoys a worst-case complexity bound of
$\mathcal{O}\left(\epsilon^{-2}\right)$.

As for the worst-case complexity of derivative-based methods for
solving unconstrained multiobjective optimization problems, it has
been shown in~\cite{GNGrapiglia_JYuan_YYuan_2015} that
trust-region methods provide a worst-case complexity bound of
$\mathcal{O}\left(\epsilon^{-2}\right)$. A similar bound has been
derived in~\cite{JFliege_AIFVaz_LNVicente_2019}, and improved to
$\mathcal{O}\left(\epsilon^{-1}\right)$ or
$\mathcal{O}(\log\epsilon^{-1})$, assuming convexity or strong
convexity of the different objective function components.
In~\cite{LCalderon_et_al_2020}, complexity bounds have also been
derived for $p$--order regularization methods, this time under a
H\"{o}lder continuity assumption on the derivatives of the
objective function components.

Regarding the WCC of multiobjective derivative-free optimization
algorithms, a first work~\cite{AZilinskas_2013} showed that an
optimal worst-case algorithm for Lipschitz functions can be
reduced to the computation of centers of balls producing an
uniform cover of the feasible region. A biobjective optimization
algorithm for single variable, twice continuously differentiable
functions was proposed and analyzed
in~\cite{JMCalvin_AZilinskas_2020}. The authors prove that after
$\ell\in \mathbb{N}$ function evaluations, the number of points
that do not belong to the Pareto front is of
$\mathcal{O}(\log(\ell)^2)$.

In this work, we first establish a worst-case complexity bound for
the original Direct Multisearch (DMS)~\cite{ALCustodio_et_al_2011}
class of methods. We show that the DMS algorithm takes at most
$\mathcal{O}\left(|L(\epsilon)|\epsilon^{-2m}\right)$ iterations
for driving a criticality measure below $\epsilon>0$, where
$|L(\epsilon)|$ represents the cardinality of the set of linked
sequences between the first unsuccessful iteration and the
iteration immediately before the one where the criticality
condition is satisfied. We then focus on a particular instance of
this class of algorithms, which considers a more restrictive
condition to accept new nondominated points. For that, we resort
to the standard min-max formulation of the multiobjective
optimization problem, which is widely used in the literature
(e.g., see~\cite{KDVVillacorta_POOliveira_ASoubeyran_2014}
and~\cite[\S 4.2]{GNGrapiglia_JYuan_YYuan_2015} for multiobjective
trust-region methods or~\cite{RTMarler_JSArora_2004} for
additional references). We are able to establish that this
particular instance of DMS enjoys a worst-case complexity bound of
$\mathcal{O}\left(\epsilon^{-2}\right)$ for driving the same
criticality measure below $\epsilon>0$. This bound is identical,
in terms of $\epsilon$, to the one derived for multiobjective
gradient descent~\cite{JFliege_AIFVaz_LNVicente_2019} and
trust-region~\cite{GNGrapiglia_JYuan_YYuan_2015} methods.

With regard to the strategy used to establish the WCC of the
min-max formulation, we highlight that it is not equivalent to a
straightforward application of the technique used for
single-objective optimization to the scalar function obtained by
considering the maximum of the components of the objective
function. In particular, the analysis in~\cite{LNVicente_2013},
which establishes the WCC of directional direct-search for
single-objective optimization, relies on the differentiability of
the objective function, which does not hold when a min-max
formulation is considered. However, the analysis we propose takes
into account the differentiability of the single components.

The remaining of the paper is organized as follows. In
Section~\ref{sec_prelim}, we recall some known results on
multiobjective optimization, which will be used throughout the
paper. The complexity analysis of DMS in its most general form
will be established in Section~\ref{sec:dms}.
Section~\ref{sec_mo_ds} introduces the min-max formulation and
establishes a worst-case complexity bound for it. Some conclusions
are drawn in Section~\ref{sec_conc}.

\section{Preliminaries}\label{sec_prelim}

Let us consider the unconstrained multiobjective derivative-free
optimization problem, defined as
\begin{eqnarray}\label{pr:MOO}
\min\quad & F(x):=\left(f_1(x),\dots,f_m(x)\right)^\top ~~~
\mbox{s.t.} \quad&  x\in\mathbb{R}^n,
\end{eqnarray}where $m\geq 2$,  and each
$f_i:\mathbb{R}^n\rightarrow\mathbb{R}\cup \{+\infty\},\,i\in
I:=\{1,\dots,m\}$ is a continuously differentiable function with
Lipschitz continuous gradient.

When solving a multiobjective optimization problem of
type~(\ref{pr:MOO}), the goal is to identify a \textit{local
Pareto minimizer}~\cite{JFliege_BFSvaiter_2000}, i.e. a point
$x^*\in \mathbb{R}^n$ such that it does not exist another point
$x$ in a neighborhood $\mathcal{N}$ of $x^*$ that dominates $x^*$,
according to Definition~\ref{def:pareto_dominance}.

\begin{definition}[Pareto dominance]\label{def:pareto_dominance}
We say that point $x$ dominates point $x^*$ when $F(x)\prec_F
F(x^*)$, i.e., when $F(x^*)-F(x) \in
\mathbb{R}_{+}^m\setminus\{0\}$.
\end{definition}

Point $x^*$ is then a local Pareto minimizer, if there is a
neighboorhood $\mathcal{N}$ of $x^*$ where $x^*$ is nondominated,
meaning $F(x)\nprec_F F(x^*)$ for all $x\in\mathcal{N}$.

A necessary condition for $x^*\in\mathbb{R}^n$ to be a local
Pareto minimizer is~\cite{JFliege_BFSvaiter_2000}:
\begin{equation}\label{pareto_critical}
\forall d\in\mathbb{R}^n,\, \exists i_d\in I\,:\, \nabla
f_{i_d}(x^*)^{\top}d\geq 0.
\end{equation}
A point satisfying \eqref{pareto_critical} is called a
\textit{Pareto critical point}~\cite{JFliege_BFSvaiter_2000}. We
are then interested in finding Pareto critical points.
In what comes next, $\| \cdot \|$ will denote the vector or matrix $\ell_2$-norm.

Following~\cite{JFliege_BFSvaiter_2000}, to characterize Pareto
critical points, we are going to use, for a given
$x\in\mathbb{R}^n$, the function:
\begin{equation}\label{def_mu_original}
\mu(x):= -\min_{\|d\|\leq 1 }\max_{i\in I} \nabla f_i(x)^{\top}d.
\end{equation}
Fliege and Svaiter~\cite{JFliege_BFSvaiter_2000} showed how some
properties of $\mu(x)$, as reported in the following lemma, relate
to the concept of Pareto critical points. We denote by
$\mathcal{F}(x)$ the solution set of~\eqref{def_mu_original}.

\begin{lemma} \textnormal{\cite[Lemma 3]{JFliege_BFSvaiter_2000}}
   For a given $x\in\mathbb{R}^n$, assume that, for all $i \in I$, the function $f_i$ is continuously differentiable at
   $x$ and let $\mu(x)$ be defined as in \eqref{def_mu_original}. Then:
    \begin{enumerate}
        \item $\mu(x)\geq 0$;
        \item if $x$ is a Pareto critical point of \eqref{pr:MOO} then $0\in \mathcal{F}(x)$ and $\mu(x)=0$;
        \item if $x$ is not a Pareto critical point of \eqref{pr:MOO} then $\mu(x)>0$ and for any $d\in \mathcal{F}(x)$  we have
        \begin{equation*}
        \nabla f_j(x)^\top d\leq \max_{i\in I}\nabla f_i(x)^\top d<0, \;\forall j\in I,
        \end{equation*}
        i.e. $d$ is a descent direction of \eqref{pr:MOO};
        \item the function $x\mapsto \mu(x)$ is continuous;
        \item if $x_k$ converges to $\bar{x}$, $d_k\in\mathcal{F}(x_k)$ and $d_k$ converges to $\bar{d}$, then $\bar{d}\in\mathcal{F}(\bar{x})$.
    \end{enumerate}
\end{lemma}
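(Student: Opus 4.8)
The plan is to work throughout with the inner function $\phi_x(d):=\max_{i\in I}\nabla f_i(x)^\top d$, which, being a finite maximum of linear functions, is convex and continuous in $d$ (in fact Lipschitz in $d$, with constant $\max_{i\in I}\|\nabla f_i(x)\|$); since the closed unit ball is compact, the minimum defining $\mu(x)$ in \eqref{def_mu_original} is attained, so $\mathcal{F}(x)$ is a nonempty (compact, convex) set and $\mu(x)$ is well defined. Item~1 then follows by testing the feasible point $d=0$, for which $\phi_x(0)=0$, so $\min_{\|d\|\leq 1}\phi_x(d)\leq 0$ and hence $\mu(x)\geq 0$. For item~2, if $x$ is Pareto critical then \eqref{pareto_critical} states that every $d$ admits some $i_d$ with $\nabla f_{i_d}(x)^\top d\geq 0$, i.e.\ $\phi_x(d)\geq 0$ for all $d$; combined with $\phi_x(0)=0$ this forces $\min_{\|d\|\leq 1}\phi_x(d)=0$, attained at $d=0$, giving $0\in\mathcal{F}(x)$ and $\mu(x)=0$.

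For item~3, the first move is to negate the criticality condition \eqref{pareto_critical}: if $x$ is not Pareto critical there exists $\bar d$, necessarily nonzero, with $\nabla f_i(x)^\top\bar d<0$ for every $i\in I$. Rescaling $\bar d$ by $1/\|\bar d\|$ preserves all these strict inequalities, so $\phi_x(\bar d/\|\bar d\|)<0$ and therefore $\min_{\|d\|\leq 1}\phi_x(d)<0$, i.e.\ $\mu(x)>0$. Moreover, any $d\in\mathcal{F}(x)$ realizes this minimum, so $\max_{i\in I}\nabla f_i(x)^\top d=-\mu(x)<0$; since $\nabla f_j(x)^\top d\leq\max_{i\in I}\nabla f_i(x)^\top d$ for each $j\in I$, the displayed chain of inequalities and the descent property follow immediately.

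Items~4 and~5 are obtained by stability arguments for the parametric inner problem. For item~4, rather than invoking an abstract maximum theorem I would derive the explicit modulus estimate: for $\|d\|\leq 1$ one has $|\phi_x(d)-\phi_{x'}(d)|\leq\max_{i\in I}\|\nabla f_i(x)-\nabla f_i(x')\|$ uniformly in $d$, whence $|\mu(x)-\mu(x')|\leq\max_{i\in I}\|\nabla f_i(x)-\nabla f_i(x')\|$, and continuity of $\mu$ follows from continuity of each $\nabla f_i$. For item~5, I would pass to the limit in the optimality inequality $\max_{i\in I}\nabla f_i(x_k)^\top d_k\leq\max_{i\in I}\nabla f_i(x_k)^\top d$, which holds for every feasible $d$; feasibility $\|\bar d\|\leq 1$ survives the limit, and using $x_k\to\bar x$, $d_k\to\bar d$, continuity of the gradients and of the $\max$, the limiting inequality reads $\max_{i\in I}\nabla f_i(\bar x)^\top\bar d\leq\max_{i\in I}\nabla f_i(\bar x)^\top d$ for all feasible $d$, i.e.\ $\bar d\in\mathcal{F}(\bar x)$.

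The only genuinely non-routine points are the logical negation of \eqref{pareto_critical} together with the rescaling step in item~3, and the uniform-in-$d$ perturbation bound behind item~4; the latter is where some care is needed, and I would favour the explicit Lipschitz-in-the-data estimate over a soft appeal to Berge's maximum theorem, since it additionally produces a quantitative modulus of continuity for $\mu$ that may be reused elsewhere in the complexity analysis.
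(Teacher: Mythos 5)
Your argument is correct, and there is nothing in the paper to compare it against: the lemma is imported verbatim from Fliege and Svaiter (cited as Lemma~3 of that reference) and the paper gives no proof of it. Your self-contained verification follows the standard route — well-posedness of the inner min-max via compactness of the unit ball, feasibility of $d=0$ for items 1--2, negation of \eqref{pareto_critical} plus rescaling for item 3, and the uniform-in-$d$ perturbation bound and limit passage in the optimality inequality for items 4--5 — and each step checks out; the explicit modulus $|\mu(x)-\mu(x')|\leq\max_{i\in I}\|\nabla f_i(x)-\nabla f_i(x')\|$ is a nice quantitative bonus over a soft maximum-theorem appeal. The only caveat worth flagging is that items 4 and 5 implicitly need the gradients $\nabla f_i$ to exist and be continuous on a neighborhood of the relevant points (continuity of $\mu$ \emph{at} $x$ cannot follow from differentiability at the single point $x$ alone), which is harmless here since the paper's standing assumption is continuous differentiability with Lipschitz gradients on $\mathbb{R}^n$.
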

Function $\mu$ can then be used to provide information about
Pareto criticality of a given point and plays a role similar to
the one of the norm of the gradient in single-objective
optimization.

The following lemma describes the relationship between function
$\mu$ and the norm of the gradient of the components of $F$.
\begin{lemma}\label{lemma:relation:mu:gradF}
For a given $x\in\mathbb{R}^n$ and $\epsilon>0$, assume that, for all $i \in I$, $\nabla f_i(x)$ is well defined.
If $\mu(x)>\epsilon$, then $\|\nabla
f_i(x)\|>\epsilon$, for all $i \in I$.
\end{lemma}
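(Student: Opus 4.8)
The plan is to work directly from the definition \eqref{def_mu_original} of $\mu(x)$ and use nothing beyond the Cauchy--Schwarz inequality. First I would rewrite the hypothesis $\mu(x)>\epsilon$ as $\min_{\|d\|\leq 1}\max_{i\in I}\nabla f_i(x)^\top d<-\epsilon$. Since the feasible set $\{d:\|d\|\leq 1\}$ is compact and the map $d\mapsto\max_{i\in I}\nabla f_i(x)^\top d$ is continuous (a finite maximum of linear functions, well defined because each $\nabla f_i(x)$ exists by assumption), the minimum is attained at some $\bar d\in\mathcal{F}(x)$ with $\|\bar d\|\leq 1$ and $\max_{i\in I}\nabla f_i(x)^\top\bar d<-\epsilon$. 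Attainment is not strictly necessary — one could instead just pick any $\bar d$ in the unit ball at which the objective drops below $-\epsilon$ — but it is convenient to have a single realizing direction.

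Next, fix an arbitrary index $i\in I$. From the previous step, $\nabla f_i(x)^\top\bar d\leq\max_{j\in I}\nabla f_j(x)^\top\bar d<-\epsilon$. On the other hand, Cauchy--Schwarz together with $\|\bar d\|\leq 1$ gives $\nabla f_i(x)^\top\bar d\geq-\|\nabla f_i(x)\|\,\|\bar d\|\geq-\|\nabla f_i(x)\|$. Chaining these two estimates yields $-\|\nabla f_i(x)\|<-\epsilon$, that is, $\|\nabla f_i(x)\|>\epsilon$. Since $i$ was arbitrary, the bound holds for every $i\in I$, which is the claim.

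The argument is entirely elementary and I do not anticipate any real obstacle. The only points deserving a word of care are: ensuring the minimum in \eqref{def_mu_original} is meaningful (hence the remark on compactness and continuity, relying on the standing assumption that each $\nabla f_i(x)$ is well defined), and exploiting the fact that the \emph{same} direction $\bar d$ that realizes the min bounds $\nabla f_i(x)^\top\bar d$ below $-\epsilon$ for \emph{all} components simultaneously — this is precisely what allows the gradient bound to be concluded uniformly over $i\in I$ in one stroke.
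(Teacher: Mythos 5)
Your argument is correct and coincides with the paper's own proof: both extract a direction $\bar d$ in the unit ball with $\max_{i\in I}\nabla f_i(x)^\top \bar d<-\epsilon$ and then apply the Cauchy--Schwarz inequality componentwise to get $\|\nabla f_i(x)\|>\epsilon$ for every $i\in I$. Your extra remark on attainment of the minimum is harmless but not needed, exactly as you note.
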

\begin{proof}If $\mu(x)>\epsilon$, then
    $$
    \min_{\|d\|\leq 1 }\max_{i\in I} \nabla f_i(x)^{\top}d<-\epsilon.
    $$
    As a consequence, it exists $\bar{d}$ such that $\|\bar{d}\|\leq1$ and
    $$
    \max_{i\in I} \nabla f_i(x)^{\top}\bar d<-\epsilon,
    $$
    that is, for all $i\in I$, we have
    $$
    - \nabla f_i(x)^{\top}\bar d>\epsilon.
    $$
    Hence, as   $- \nabla f_i(x)^{\top}\bar d\leq \| \nabla f_i(x)\|\|\bar d\|\leq\| \nabla f_i(x)\| $, the thesis follows. 
\end{proof}

In the following, we set $\mu_k := \mu(x_k)$, for all $k\ge 0$.

\section{Direct Multisearch}\label{sec:dms}

Direct Multisearch (DMS) was proposed
in~\cite{ALCustodio_et_al_2011} and generalizes directional
direct-search to multiobjective derivative-free optimization. It
is a general class of methods, that can encompass many algorithmic
variants, depending for instance on the globalization strategy
considered. In this work, we will require sufficient decrease for
accepting new points, via the use of a forcing function $\rho :
]0,+\infty[\longrightarrow ]0,+\infty[$.
Following~\cite{TGKolda_RMLewis_VTorczon_2003}, $\rho$ is a
continuous nondecreasing function, satisfying $\rho(t)/t \to 0$
when $t\downarrow 0$. We consider the typical forcing function
$\rho(t) = ct^p$, with $p>1$, and $c>0$.

DMS makes use of the strict partial order induced by the cone
$\mathbb{R}_{+}^m$. Let $D(L)\subset\mathbb{R}^m$ be the image of
the set of points dominated by a list of evaluated points $L$ and
let $D(L; a)$ be the set of points whose distance in the
${\ell}_{\infty}$-norm to $D(L)$ is no larger than $a>0$.
Algorithm~\ref{alg:dms} corresponds to an instance of the original
DMS~\cite{ALCustodio_et_al_2011} method, which uses a
globalization strategy based on the imposition of a sufficient
decrease condition.

\begin{algorithm}
    \begin{rm}
        \begin{description}
            \item[Initialization] \ \\
            Choose $x_0\in\mathbb{R}^n$ with $f_i(x_0)<+\infty,\forall i
            \in I$, $\alpha_0>0$ an initial stepsize, $0 < \beta_1
            \leq \beta_2 < 1$ the coefficients for  stepsize contraction and
            $\gamma \geq 1$ the coefficient for stepsize expansion. Let
            $\mathcal{D}$ be a set of positive spanning sets. Initialize the
            list of nondominated points and corresponding stepsize
            parameters $L_0=\{ (x_0;\alpha_0 ) \}$. \vspace{1ex}
            \item[For $k=0,1,2,\ldots$] \ \\
            \begin{enumerate}
                \item {\bf Selection of an iterate point:} Order the list $L_k$
                according to some criteria and select the first item $(x;\alpha)
                \in L_k$ as the current iterate and stepsize parameter (thus
                setting $(x_k;\alpha_k)=(x;\alpha)$).\vspace{1ex} \item {\bf Search step:}
                Compute a finite set of points $\{z_s\}_{s\in S}$ and evaluate $F$ at
                each  point in $S$. Compute $L_{trial}$ by removing all
                dominated points, using sufficient decrease, from $L_{k} \cup
                \{(z_s;\alpha_k): s\in S \}$ and selecting a subset of the remaining
                nondominated points. If $L_{trial}
                \neq L_k$ declare the iteration (and the search step) as successful,
                set $L_{k+1}=L_{trial}$, and skip the poll step.\vspace{1ex} \item {\bf Poll
                    step:} Choose a positive spanning set~$D_k$ from the set
                $\mathcal{D}$. Evaluate $F$ at the poll points belonging
                to $\{ x_k+\alpha_k d: \, d \in D_k \}$. Compute $L_{trial}$ by
                removing all dominated points, using sufficient decrease, from $L_{k}
                \cup \{(x_k+\alpha_k d;\alpha_k): d\in D_k \}$ and selecting a subset of
                the remaining nondominated points. If $L_{trial} \neq L_k$ declare the
                iteration (and the poll step) as successful and set $L_{k+1}=L_{trial}$. Otherwise,
                declare the iteration (and the poll step) as unsuccessful and set
                $L_{k+1} = L_k$.
                \vspace{1ex}
                \item {\bf Stepsize parameter update:} If the iteration was successful, then
                maintain  or increase
                the corresponding stepsize parameter, by considering
                $\alpha_{k,new}$ $\in [\alpha_k, \gamma \alpha_k]$.  Replace
                all the new points $(x_k+\alpha_k d;\alpha_k)$ in $L_{k+1}$ by
                $(x_k + \alpha_k d;\alpha_{k,new})$, when success is coming from
                the poll step, or $(z_s;\alpha_k)$ in $L_{k+1}$ by
                $(z_s;\alpha_{k,new})$, when success is coming from the search
                step.
                Replace also $(x_k;\alpha_k)$, if in $L_{k+1}$, by
                $(x_k;\alpha_{k,new})$.\\
                Otherwise, decrease the stepsize parameter, by choosing
                $\alpha_{k,new} \in [\beta_1 \alpha_k, \beta_2 \alpha_k]$, and
                replace the poll pair $(x_k;\alpha_k)$ in $L_{k+1}$ by
                $(x_k;\alpha_{k,new})$. \vspace{1ex}
            \end{enumerate}
        \end{description}
    \end{rm}
    \caption{DMS using sufficient decrease as globalization strategy.}
    \label{alg:dms}
\end{algorithm}

DMS declares an iteration as successful when there are
modifications in the list of nondominated points, meaning that a
new point $x$ was accepted, such that $F(x)\notin
D(L;\rho(\alpha))$, where $\alpha$ represents a
stepsize parameter associated with the current
iteration.

For analyzing the worst-case behavior of the
algorithms presented in this paper, we will need the following
assumptions with regard to the component functions
in~\eqref{pr:MOO}.

\begin{assumption} \label{assum:centralized:pbC11}
    For all $i \in I$, the function $f_i$ is continuously differentiable with
    Lipschitz continuous gradient with constant
    $L_i$. Set $L_{\max}=\max_{i\in I} L_i$.
\end{assumption}

\begin{assumption}\label{assum:lbup}
    For all $i \in I$, the function $f_i$ is lower and upper bounded in $\{x\in\mathbb{R}^n: F(x) \notin D(\{x_0\})\}$, with lower bound $f_i^{\min}$ and
    upper bound $f_i^{\max}$.
    Let $F^{\min}  := \min \{f_1^{\min}, \ldots, f_m^{\min} \}$
    and
    $F^{\max}  := \max \{f_1^{\max}, \ldots, f_m^{\max} \}$.
\end{assumption}

\begin{assumption}\label{assum:compact_set}
    The set $\{x\in\mathbb{R}^n: F(x) \notin
    D(\{x_0\})\}$ is compact.
\end{assumption}

At an unsuccessful iteration of Algorithm~\ref{alg:dms}, none of
the components of the objective function is improved, since no new
point is added to the list. However, the use of Pareto dominance
to accept new points implies that successful iterations do not
necessarily correspond to points that improve all components of
the objective function. In fact, at some successful iterations,
some of these components could increase the corresponding value.
Nevertheless, at every successful iteration, the hypervolume (see
Definition~\ref{def:HI} or~\cite{EZitzler_1999}) corresponding to
the current list of nondominated points always increases.

\begin{definition}\cite[Definition 5.2]{ALCustodio_MEmmerich_JFAMadeira_2012}\label{def:HI}
    The hypervolume indicator (or $S$--metric, from `Size of space covered') for some
    (approximation) set $A \subset \real^m$ and a reference point $r\in \real^m$ that is dominated by all the points in $A$ is defined as:
    \[
    \HI (A) \; := \; \Vol \{ b \in \real^m : b \leq r \wedge \exists a \in A: a \leq b\} \; = \; \Vol \left( \bigcup_{a \in A} [a,r]\right).
    \]
    The inequalities should be understood componentwise, $\Vol(\cdot)$ denotes the Le\-bes\-gue measure of a $m$--dimensional set
    of points, and $[a,r]$ denotes the interval box with lower corner $a$ and upper corner $r$.
\end{definition}

Define $F(L)$ as the image set of a list of points $L$,
i.e,\linebreak $F(L):=\{F(x):(x,\alpha)\in L\}$. We will consider
$r=(f_1^{\max},\ldots,f_m^{\max})$, when computing a hypervolume.
Lemma~\ref{lemma:hypervolume} quantifies the increase in the
hypervolume, associated to successful iterations.

\begin{lemma}\label{lemma:hypervolume} In
Algorithm~\ref{alg:dms}, for a successful iteration $k\geq 0$, we
have
\[
\HI(F(L_{k+1})) - \HI(F(L_{k})) \; \geq \;\left(
\rho(\alpha_k)\right)^m.
\]
\end{lemma}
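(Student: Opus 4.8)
I want to show that at a successful iteration the hypervolume grows by at least $(\rho(\alpha_k))^m$. The key observation is that a successful iteration produces (at least) one new accepted point $y$ — either a search point $z_s$ or a poll point $x_k+\alpha_k d$ — whose image $F(y)$ lies outside the thickened dominated region $D(L_k;\rho(\alpha_k))$; otherwise it would have been removed by the sufficient-decrease filtering and the list would not change. Since $F(y)\notin D(L_k;\rho(\alpha_k))$, its $\ell_\infty$-distance to $D(L_k)$ exceeds $\rho(\alpha_k)$, which means the open box $B := \prod_{i=1}^m \bigl(F_i(y)-\rho(\alpha_k),\,F_i(y)\bigr)$ — equivalently, the translate of the box of side $\rho(\alpha_k)$ sitting just "below" $F(y)$ in every coordinate — is disjoint from $D(L_k)$, hence contributes no volume to $\HI(F(L_k))$. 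On the other hand, every point $b\in B$ satisfies $b \le F(y)$ componentwise and $b\le r$ (using that $F(y)$ is itself dominated by $r=(f_1^{\max},\dots,f_m^{\max})$ on the relevant set, by Assumption~\ref{assum:lbup}), so $b$ lies in $[F(y),r]$ and therefore contributes to $\HI(F(L_{k+1}))$ via the union $\bigcup_{a\in F(L_{k+1})}[a,r]$.

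Carrying this out, I would first argue monotonicity: because $L_{k+1}$ is obtained from $L_k\cup\{\text{trial points}\}$ by discarding only dominated points, we have $D(L_k)\subseteq D(L_{k+1})$, so $\HI(F(L_{k+1}))\ge \HI(F(L_k))$ with the difference equal to $\Vol\bigl(\bigl(\bigcup_{a\in F(L_{k+1})}[a,r]\bigr)\setminus\bigl(\bigcup_{a\in F(L_{k})}[a,r]\bigr)\bigr)$. Then I would exhibit the box $B$ above as a subset of the right-hand side: $B\subseteq [F(y),r]\subseteq \bigcup_{a\in F(L_{k+1})}[a,r]$ since $(y,\alpha_k)\in L_{k+1}$ (the accepted point is in the new list), while $B\cap \bigl(\bigcup_{a\in F(L_{k})}[a,r]\bigr) = B\cap D(L_k) = \emptyset$ precisely because $F(y)$ has $\ell_\infty$-distance $>\rho(\alpha_k)$ to $D(L_k)$ and $B$ is the half-open box of side $\rho(\alpha_k)$ abutting $F(y)$ from below. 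Since $B$ is an axis-aligned box with all side lengths equal to $\rho(\alpha_k)$, its Lebesgue measure is $(\rho(\alpha_k))^m$, giving the claimed bound.

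**The main obstacle.** The delicate point is the clean geometric translation of "$F(y)\notin D(L_k;\rho(\alpha_k))$, i.e. $\mathrm{dist}_{\ell_\infty}(F(y),D(L_k))>\rho(\alpha_k)$" into "the box $B$ of side $\rho(\alpha_k)$ below $F(y)$ misses $D(L_k)$". One has to be careful about which box: $D(L_k)$ is an upper set (if $p\in D(L_k)$ and $q\ge p$ then $q\in D(L_k)$), so the nearest points of $D(L_k)$ to $F(y)$ are "above" $F(y)$; a point $b$ with $b_i<F_i(y)$ for all $i$ and $b\in D(L_k)$ would force $F(y)\in D(L_k)$ as well (upper set), contradicting that $F(y)$ is accepted, and more quantitatively any $b\in D(L_k)$ must have some coordinate $b_i\ge F_i(y)-\rho(\alpha_k)$ only if... — this needs the exact definition of $D(L;a)$ as the $a$-thickening in $\ell_\infty$. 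I would make this rigorous by noting that if some $b\in B\cap D(L_k)$ existed, then $b$ itself witnesses $\mathrm{dist}_{\ell_\infty}(F(y),b)<\rho(\alpha_k)$ with $b\in D(L_k)$, contradicting $F(y)\notin D(L_k;\rho(\alpha_k))$; this is immediate once $B$ is taken as the open box $\prod_i(F_i(y)-\rho(\alpha_k),F_i(y))$, since every such $b$ has $\|F(y)-b\|_\infty<\rho(\alpha_k)$. The only remaining care is boundedness of the box within $[{}\cdot{},r]$, which follows from $F(y)\le r$ on $\{x:F(x)\notin D(\{x_0\})\}$ under Assumption~\ref{assum:lbup} together with the fact that accepted points keep their images out of $D(\{x_0\})\subseteq D(L_k)$.
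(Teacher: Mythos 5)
Your overall strategy (a new accepted point $y$ has $F(y)\notin D(L_k;\rho(\alpha_k))$, so a box of side $\rho(\alpha_k)$ near $F(y)$ is disjoint from $D(L_k)$, and this box accounts for the hypervolume increase) is exactly the paper's argument, and your $\ell_\infty$-ball reasoning for disjointness from $D(L_k)$ is fine. However, you chose the box on the wrong side of $F(y)$, and the inclusion you rely on is false. With the paper's definition, $[F(y),r]=\{c:\,F(y)\leq c\leq r\}$, so a point $b$ of your box $B=\prod_i\bigl(F_i(y)-\rho(\alpha_k),\,F_i(y)\bigr)$ satisfies $b_i<F_i(y)$ for every $i$ and therefore does \emph{not} lie in $[F(y),r]$; the step ``$b\leq F(y)$ componentwise, so $b\in[F(y),r]$'' reverses the inequality defining the interval box. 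Worse, such $b$ is not dominated by $F(y)$ (nor, by your own disjointness argument, by any point of $L_k$), so your box contributes essentially nothing to $\HI(F(L_{k+1}))$: the hypervolume measures the region \emph{between the front and the reference point} $r$, i.e.\ points lying componentwise \emph{above} some attained value, and the region below $F(y)$ is never counted.

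The fix is the paper's choice: take the hypercube sitting above $F(y)$ toward the reference point, $[F(y),\,F(y)+\rho(\alpha_k)e]\subset F(y)+(\mathbb{R}_0^+)^m$ (with $e$ the all-ones vector). It lies inside $[F(y),r]\subseteq\bigcup_{a\in F(L_{k+1})}[a,r]$ because $(y,\alpha_k)\in L_{k+1}$, and it is disjoint from $D(L_k)\supseteq\bigcup_{a\in F(L_k)}[a,r]$ by precisely your $\ell_\infty$ argument, since all its points are within $\ell_\infty$-distance $\rho(\alpha_k)$ of $F(y)$ while $F(y)\notin D(L_k;\rho(\alpha_k))$. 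Its volume is $\rho(\alpha_k)^m$, which gives the claimed lower bound; the rest of your write-up (existence of the new accepted point, monotonicity of the dominated region, reduction of the increase to the volume of a set missed by $F(L_k)$ but covered by $F(L_{k+1})$) then goes through unchanged.
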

\begin{proof}
If $k$ is a successful iteration then $L_{k+1}\neq L_k$. Let $x\in
L_{k+1}$ be such that $x\notin L_k$. In this situation,
$F(x)\notin D(L_k;\rho(\alpha_k))$.

Thus $B_{\infty}(F(x),\rho(\alpha_k))\cap D(L_k)=\emptyset$, where
$B_{\infty}(F(x),\rho(\alpha_k))$ represents the $\ell_{\infty}$
ball centered at $F(x)$, with radius $\rho(\alpha_k)$. This means
that at least a hypercube of volume $\rho(\alpha_k)^m$ was added
to the dominated region (the one belonging to
$F(x)+(\mathbb{R}_0^+)^m$).
\end{proof}

Fig.~\ref{fig:hypervolume} illustrates the situation, where for a
biobjective problem, at a successful iteration, the previous
condition is satisfied as an equality. The initial list of
nondominated points is formed by the two points represented by the
dots. The point corresponding to the star, in the interior of the
shaded region, was accepted as a new nondominated point, since it
satisfies the sufficient decrease condition. Thus, the hypervolume
corresponding to the new set of nondominated points has increased
exactly in $\rho(\alpha_k)^2$.

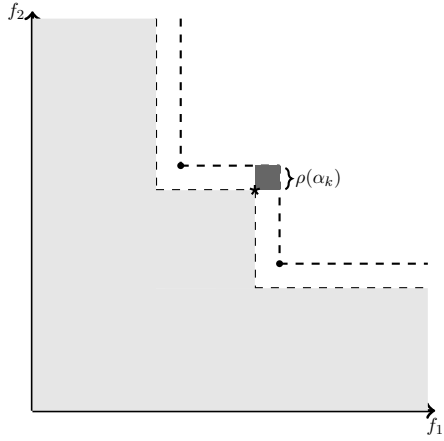
\begin{figure}
\begin{center}
\begin{tikzpicture}[thick,scale=0.65, every node/.style={transform shape}]
        \draw [very thick,->] (0,0) -- (8.15,0) node [below] {$f_1$};
        \draw [very thick,->] (0,0) -- (0,8.15) node [left] {$f_2$};
        \draw[thick,dashed] (2.5,8) -- (2.5,4.5);
        \draw[dashed] (3,8) -- (3,5);
        \draw[dashed] (3,5) -- (5,5);
        \draw[dashed] (5,5) -- (5,3);
        \draw[dashed] (5,3) -- (8,3);
        \draw[thick,dashed] (2.5,4.5) -- (4.5,4.5);
        \draw[thick,dashed] (4.5,4.5) -- (4.5,2.5);
        \draw[thick,dashed] (4.5,2.5) -- (8,2.5);
        \fill (3,5)  circle[radius=2pt];
        \fill (5,3)  circle[radius=2pt];
        \fill[gray!120!white] (4.5,4.5) -- (4.5,5) -- (5,5) -- (5,4.5) -- cycle;
        \fill[gray!20!white] (0,8) -- (2.5,8) -- (2.5,2.5)-- (8,2.5) -- (8,0) -- (0,0)--cycle;
        \fill[gray!20!white](2.49,2.5)-- (2.49,4.5)-- (4.5,4.5)-- (4.5,2.49)--cycle;
        \draw plot[mark=star, mark options={thick, scale=1.5}] coordinates { (4.5,4.5)};
        \node[] at (5.35,4.5){};
                \draw[decoration={brace,raise=4pt},decorate]
        (4.9,4.95) -- node[right=8pt] {$\rho(\alpha_k)$} (4.9,4.52);
\end{tikzpicture}
\caption{\label{fig:hypervolume}Hypervolume increase at a
successful iteration of Algorithm~\ref{alg:dms}.}
\end{center}
\end{figure}

As it is done in classical directional
direct-search~\cite{TGKolda_RMLewis_VTorczon_2003}, we assume that
all positive spanning sets considered by the algorithm include
bounded directions. In multiobjective optimization, the cone of
descent directions for all components of the objective function
can be as narrow as one would like (see
Remark~\ref{rem:newassum}). So, we need to assume density of the
directions at a given limit point, as it is considered in the
convergence analysis of DMS~\cite{ALCustodio_et_al_2011}.

\begin{definition} \label{def:refiningseq}
A subsequence  of iterates $\{x_k\}_{k\in K}$, corresponding to
unsuccessful poll steps, is said to be a refining subsequence if
$\{\alpha_k\}_{k\in K}$ converges to zero.
\end{definition}

The existence of at least one convergent refining subsequence is a
direct consequence of Assumption~\ref{assum:compact_set} and the
use of sufficient decrease for accepting new nondominated points.
Refining directions are limits of normalized poll directions
associated with the refining subsequence. Without loss of
generality, we will assume that all the positive spanning sets
considered have normalized directions.

\begin{assumption} \label{assum:globalconvergence}
Consider Algorithm~\ref{alg:dms} and let $x^*$ be the limit point
of a convergent refining subsequence. Assume that the set of
refining directions associated with $x^*$ is dense in the unit
sphere.
\end{assumption}

We will make use of the following result, which establishes a
relationship between the stepsize parameter at an unsuccessful
iteration of a directional direct-search method and
$\mu_{D_k}(x_k)$, an approximation to $\mu_k=\mu(x_k)$ which only
considers the poll directions.

\begin{lemma}\label{lem:mustep}
    Let Assumption~\ref{assum:centralized:pbC11} hold. Let $k$ be an unsuccessful iteration of Algorithm~\ref{alg:dms},
    $D_k$ be the positive spanning set considered, and $\alpha_k >0$ be the corresponding stepsize.
    Define
    \begin{equation}
\mu_{D_k}(x):= -\min_{d\in D_k, \|d\| \leq 1 }\max_{i\in I} \nabla
f_i(x)^{\top}d.
\end{equation}
Then
    \begin{equation}
  \mu_{D_k}(x_k)  \; \leq \;  \left( \frac{L_{\max}}{2} \alpha_k + \frac{\rho(\alpha_k)}{\alpha_k}\right).\end{equation}
\end{lemma}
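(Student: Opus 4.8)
The plan is to exploit the fact that $k$ is an unsuccessful iteration, which means every poll point $x_k + \alpha_k d$ with $d \in D_k$ fails to be accepted, hence its image lies in $D(L_k;\rho(\alpha_k))$. In particular, since $(x_k;\alpha_k) \in L_k$, the point $F(x_k)$ is on the boundary of the dominated region, and the failure of $x_k + \alpha_k d$ to enter $L_k$ forces, for each $d \in D_k$, the existence of some index $i_d \in I$ with
\[
f_{i_d}(x_k + \alpha_k d) \;\geq\; f_{i_d}(x_k) - \rho(\alpha_k).
\]
This is the multiobjective analogue of the usual sufficient-decrease failure inequality, and extracting it cleanly from the definition of $D(L_k;\rho(\alpha_k))$ and the $\ell_\infty$-ball characterization used in Lemma~\ref{lemma:hypervolume} is the first step.

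Next I would invoke Assumption~\ref{assum:centralized:pbC11}: each $f_i$ has $L_i$-Lipschitz gradient, so the standard descent-lemma bound gives
\[
f_{i_d}(x_k + \alpha_k d) \;\leq\; f_{i_d}(x_k) + \alpha_k \nabla f_{i_d}(x_k)^\top d + \frac{L_{i_d}}{2}\alpha_k^2 \|d\|^2 .
\]
Combining this with the failure inequality from the first step, dividing by $\alpha_k > 0$, and using $\|d\| \leq 1$ together with $L_{i_d} \leq L_{\max}$, yields
\[
\nabla f_{i_d}(x_k)^\top d \;\geq\; -\frac{L_{\max}}{2}\alpha_k - \frac{\rho(\alpha_k)}{\alpha_k}
\]
for each $d \in D_k$ (normalized). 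Since $\max_{i\in I}\nabla f_i(x_k)^\top d \geq \nabla f_{i_d}(x_k)^\top d$, the same lower bound holds for $\max_{i\in I}\nabla f_i(x_k)^\top d$, uniformly over $d \in D_k$ with $\|d\|\leq 1$. Taking the minimum over such $d$ and negating gives exactly $\mu_{D_k}(x_k) \leq \tfrac{L_{\max}}{2}\alpha_k + \tfrac{\rho(\alpha_k)}{\alpha_k}$, which is the claim.

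The main obstacle I anticipate is the bookkeeping in the first step: one must argue carefully that an unsuccessful poll step really does produce, for every poll direction, a component index along which the function fails to decrease by more than $\rho(\alpha_k)$ relative to $f_i(x_k)$ — the subtlety being that $x_k + \alpha_k d$ could in principle be dominated (in the $\rho(\alpha_k)$-relaxed sense) by some point of $L_k$ other than $x_k$. However, since $(x_k;\alpha_k) \in L_k$ itself, any such relaxed domination also implies relaxed domination by $F(x_k)$ would be too strong; rather, one uses that $F(x_k + \alpha_k d) \in D(L_k;\rho(\alpha_k))$ means $F(x_k+\alpha_k d) + \rho(\alpha_k)\mathbf{1}$ does not strictly improve on the whole list, and in particular comparing against the entry $x_k$ gives the needed per-direction index $i_d$. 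Once this combinatorial point is pinned down, the rest is the routine Taylor/Lipschitz estimate above.
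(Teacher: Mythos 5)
Your proposal is correct and follows essentially the same route as the paper's proof: extract, for each poll direction, a component index $i(d)$ with $f_{i(d)}(x_k+\alpha_k d) \geq f_{i(d)}(x_k)-\rho(\alpha_k)$, apply the Lipschitz-gradient (descent-lemma/integral) estimate, divide by $\alpha_k$, and pass to the max over $i$ and the min over $d\in D_k$. Your extra care on the first step is sound, though the cleanest justification is simply that if all components improved by more than $\rho(\alpha_k)$ relative to $x_k$, then the list point furnishing the relaxed domination of the trial point would strictly dominate $x_k$, contradicting the nondominance of $x_k$ within $L_k$ (the paper asserts this step without comment).
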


\begin{proof}
If iteration $k$ is unsuccessful, then for each direction $d_k\in
D_k$ there is an index $i(d_k)\in I$ such that
\begin{equation*}
f_{i(d_k)}(x_k+\alpha_k d_k) \geq  f_{i(d_k)}(x_k)-\rho(\alpha_k).
\end{equation*}

Hence, for each direction $d_k\in D_k$,
\[0 \leq f_{i(d_k)}(x_k+\alpha_k
d_k)-f_{i(d_k)}(x_k)+\rho(\alpha_k)=\] \[=\int_{0}^{1}\nabla
f_{i(d_k)}(x_k+t\alpha_k d_k)^{\top}\alpha_k  d_k\,dt
        +\rho(\alpha_k)\]
    Adding $-\alpha_k\nabla f_{i(d_k)}(x_k)^{\top}  d_k$ to both sides yields:
    \[
   -\alpha_k\nabla f_{i(d_k)}(x_k)^{\top}  d_k \leq \]
  \[ \leq\int_{0}^{1}\left(\nabla f_{i(d_k)}(x_k+t\alpha_k d_k)^{\top}\alpha_k  d_k -\alpha_k\nabla f_{i(d_k)}(x_k)^{\top} d_k\right)
  \,dt+\rho(\alpha_k)\leq\]
   \[\leq \; \alpha_k^2 \frac{L_{\max}}{2}\| d_k\|^2+\rho(\alpha_k),
    \]
    so that
     \begin{align*}
   -\nabla f_{i(d_k)}(x_k)^{\top}  d_k  \leq \; \alpha_k \frac{L_{\max}}{2}\|  d_k\|^2+\frac{\rho(\alpha_k)}{\alpha_k} .
       \end{align*}
       Then
        \begin{align*}
  \max_{i\in I}   \nabla f_{i}(x_k)^{\top}  d_k  \geq \; -\alpha_k \frac{L_{\max}}{2}\|  d_k\|^2-\frac{\rho(\alpha_k)}{\alpha_k}
       \end{align*}
       This is true for all $d_k\in D_k$ so the thesis holds.
  
\end{proof}

Hereafter, we set $\mu_{D_k} := \mu_{D_k}(x_k)$. In the current
work, WCC bounds will be derived for driving~$\mu_{D_k}$
below~$\epsilon
> 0$. However, the goal is to establish bounds for having
$\mu_{k} \leq \epsilon$. For this purpose, we consider
Assumption~\ref{assum:newassum}. A somehow similar assumption has
already been used within the context of trust-region
derivative-free methods for multiobjective optimization
(see~\cite[Assumption 4.8]{JThomann_GEichfelder_2019}).

\begin{assumption}\label{assum:newassum}
        There exists $\mathcal{C}_1 > 0$ such that
\begin{equation}\label{eq:newassum}
|\mu_{D_k} - \mu_k| \leq \mathcal{C}_1 \mu_{D_k}, \quad \forall k\geq 0.
\end{equation}
\end{assumption}

\begin{remark}\label{rem:newassum}
We note that Assumption~\ref{assum:newassum} requires the
nonnegativity of $\mu_{D_k}$ at every iteration, which may not
hold. In such cases, additional directions could be added to the
positive spanning sets considered as poll directions. Such
procedure is supported by
Assumption~\ref{assum:globalconvergence}.

However, there are cases where Assumption~\ref{assum:newassum} can
be easily satisfied. Let us consider the following biobjective
function:
\[F(x) \; := \; \frac{1}{2} \left(\|x-c_1\|^2, \|x-c_2\|^2\right)^\top,\]
where $c_1 = (-1, 1)^\top$ and $c_2 = -c_1$. This is a biobjective
version of a single-objective variant of the Dennis-Woods
function~\cite{JEDennis_DJWoods_1987} introduced
in~\cite{TGKolda_RMLewis_VTorczon_2003} (see
also~\cite{ARConn_KScheinberg_LNVicente_2009}). The
single-objective function has been used to show that coordinate
search (which considers, at every iteration, the positive spanning
set~$D_k = [I -I]$, where $I$ represents the identity matrix)
stalls at any point $(a,a)^\top$, where $a\neq 0$
(see~\cite{TGKolda_RMLewis_VTorczon_2003,ARConn_KScheinberg_LNVicente_2009}).
We will show that, for this biobjective problem, when considering
the coordinate directions as positive spanning set,
Assumption~\ref{assum:newassum} holds for the majority of points
in~$\real^2$.

When applying DMS, for simplicity, we drop the iteration index $k$
and assume that the set of poll directions is $D= [I -I]$. Then,
for any $x \in \real^2$, one has $ \mu_{D}(x)=-\min
\{x_1+1,x_2+1,-x_1+1,-x_2+1\}.$ Thus, as long as  $x$ belongs to
$\mathcal{B}=\{x \in \real^2: \lvert x_1\rvert > 1 \vee \lvert
x_2\rvert > 1\}$, $\mu_{D}(x)$ will be positive. On the other
hand,
\begin{eqnarray*}
    \mu(x)&=& \max_{\|d\|=1} \min \{-\nabla f_1(x)^Td, -\nabla f_2(x)^Td\} \\
    &\le& \min  \Big\{ \max_{\|d\|=1}-\nabla f_1(x)^Td,
    \max_{\|d\|=1} -\nabla f_2(x)^Td\Big\}.
\end{eqnarray*}
We will show that the assumption holds for all the points in
$\mathcal{B}$ such that $x_1> 1$ and $x_2>1$. For the other points
in $\mathcal{B}$ a similar reasoning can be applied. If $x_1> 1$
and $x_2>1$ one gets that $\mu_{D}(x)=\max \{x_1-1,x_2 - 1\}$ and
\begin{eqnarray*}
    \mu(x) &\le &\min \left\{\mu^1(x),\mu^2(x) \right \}, \mbox{where} \\
    \mu^1(x) &:=& (x_1-1)\sqrt{\frac{(x_1-1)^2}{(x_1-1)^2+(x_2+1)^2}}+(x_2+1)\sqrt{\frac{(x_2+1)^2}{(x_1-1)^2+(x_2+1)^2}},  \\
    \mu^2(x)& :=& (x_1+1)\sqrt{\frac{(x_1+1)^2}{(x_1+1)^2+(x_2-1)^2}}+(x_2-1)\sqrt{\frac{(x_2-1)^2}{(x_1+1)^2+(x_2-1)^2}}.
\end{eqnarray*}
Then, it holds
\begin{eqnarray*}
    \frac{\mu^1(x)}{\mu_{D}(x)}&\leq& \frac{(x_1 -1)+ (x_2+1)}{x_1-1}\leq \frac{x_1+x_2}{x_1-1},\\
    \frac{\mu^2(x)}{\mu_{D}(x)}&\leq& \frac{(x_1 +1)+(x_2-1)}{x_2-1}\leq \frac{x_1+x_2}{x_2-1}.
\end{eqnarray*}
Hence, in this case, inequality~\eqref{eq:newassum} holds with
$\mathcal{C}_1  \ge \max\left\{\frac{x_1+x_2}{x_1-1},
\frac{x_1+x_2}{x_2-1} \right\}-1$. If $x$ is far from the border
of $\mathcal{B}$ this constant will assume reasonable values. For
example, if we assume $2\leq x_1,x_2\leq 5$, then
$\mathcal{C}_1=6$.

Difficulties arise when $x\notin \mathcal{B}$ (in which case
$\mu_{D} \leq 0$) or when $x$ is close to the border of
$\mathcal{B}$ (which makes the constant $\mathcal{C}_1$ large). In
particular, if\linebreak $x=(a,a)^{\top}$ and $a \to 0$, the cone
of descent directions will become as narrow as one would like. In
such cases it is advisable to rotate the set of polling
directions. For example,  to make sure that
Assumption~\ref{assum:newassum} holds at any point $x =
(a,a)^{\top}$ with $\lvert a \rvert< 1$, a possibility would be to
choose, for $l \geq 1$, $2^{l-1}$ maximal positive basis
$\{R_iD\}_{ 0 \le i \le 2^{l-1}-1}$, where
\[ R_i \; = \; \begin{bmatrix}
   \cos (i\theta) & &-\sin (i\theta) \\
    \sin (i\theta) & & \cos (i\theta)
    \end{bmatrix},\]
with $\theta = \frac{\pi}{2^{l}}$ the angle between the generators
of the cone of descent directions. Therefore,  when $a\to 0$ (or
equivalently $\theta \to 0$), Assumption~\ref{assum:newassum} is
satisfied at the cost of increasing the number of function
evaluations.
\end{remark}

In the following theorem, we will derive a bound on the number of
successful iterations required to drive $\mu_k$ below a given
small positive threshold. For each pair of indexes $k_1<k_2$, we
will denote by $U_{k_1}(k_2)$ and $S_{k_1}(k_2)$ the set of
unsuccessful and successful iterations from $k_1$ to $k_2$,
respectively. We will also denote by $k_0$ the index of the first
unsuccessful iteration. We remark that the existence of such index
is ensured by Assumption~\ref{assum:lbup}, from which one can
prove that Algorithm~\ref{alg:dms} generates a sequence of
iterates satisfying $\lim\inf_{k\rightarrow+\infty} \alpha_k =
0$~\cite{ALCustodio_et_al_2011}.
\begin{theorem}\label{thm:bosi1}
Consider the application of Algorithm~\ref{alg:dms} to
problem~(\ref{pr:MOO}), with the choice of forcing function
$\rho(t) = ct^p$, $p>1$, $c>0$. Let
Assumptions~\ref{assum:centralized:pbC11},~\ref{assum:lbup},
and~\ref{assum:newassum} hold. Let $k_0$ be the index of the first
unsuccessful iteration. Given any $\epsilon \in \, ]0,1[$, assume
that $\mu_{k_0} > \epsilon$ and let $j_1$ be the first iteration
after $k_0$ such that $\mu_{j_1+1}\leq \epsilon$. Then, to achieve
$\mu_{j_1+1}\leq \epsilon$ starting from $k_0$,
Algorithm~\ref{alg:dms} takes at most $|S_{k_0} (j_1)| =
\mathcal{O}\left(\epsilon^{-{\frac{pm}{\min(p-1, 1)}}}\right)$
successful iterations.
\end{theorem}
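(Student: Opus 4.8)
The plan is to bound separately the number of successful iterations in $S_{k_0}(j_1)$ and the number of unsuccessful iterations, although for this particular statement only the successful count is requested; the key quantity is the hypervolume, which is monotone and bounded. First I would invoke Lemma~\ref{lemma:hypervolume}: every successful iteration $k$ increases $\HI(F(L_k))$ by at least $\rho(\alpha_k)^m = (c\alpha_k^p)^m$. Since the hypervolume is bounded above (by Assumption~\ref{assum:lbup}, the reference point $r=(f_1^{\max},\dots,f_m^{\max})$ together with the lower bounds $f_i^{\min}$ give a finite box, so $\HI(F(L_k)) \le (F^{\max}-F^{\min})^m$ for all $k$), summing these increments over $k\in S_{k_0}(j_1)$ yields
\[
|S_{k_0}(j_1)| \cdot c^m \min_{k\in S_{k_0}(j_1)}\alpha_k^{pm} \;\le\; \sum_{k\in S_{k_0}(j_1)} \rho(\alpha_k)^m \;\le\; \HI_{\max} - \HI(F(L_{k_0})) \;\le\; (F^{\max}-F^{\min})^m.
\]
So it remains to produce a lower bound on $\alpha_k$ that holds for all relevant successful iterations, of the form $\alpha_k \ge \text{const}\cdot \epsilon^{1/\min(p-1,1)}$.

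To get that lower bound, I would argue as in the single-objective direct-search complexity analysis. For each $k$ with $k_0 \le k \le j_1$, by hypothesis $\mu_k > \epsilon$ (since $j_1$ is the \emph{first} index after $k_0$ with $\mu_{j_1+1}\le\epsilon$, every $\mu_k$ for $k$ up to $j_1$ exceeds $\epsilon$), and Assumption~\ref{assum:newassum} gives $\mu_{D_k} \ge \mu_k/(1+\mathcal{C}_1) > \epsilon/(1+\mathcal{C}_1)$. Now I need a lower bound on $\alpha_k$ at an arbitrary (not necessarily unsuccessful) successful iteration. The standard trick: if iteration $k$ is successful, consider the most recent unsuccessful iteration at the same incumbent, or rather use the stepsize-update mechanics — because $\liminf \alpha_k = 0$ and stepsizes shrink only at unsuccessful iterations (by a factor at least $\beta_1$) and grow by at most $\gamma$ at successful ones, one relates $\alpha_k$ at a successful iteration to the stepsize at a nearby unsuccessful iteration where Lemma~\ref{lem:mustep} applies. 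Concretely, for an unsuccessful iteration $\ell$ with $\mu_{D_\ell} > \epsilon/(1+\mathcal{C}_1)$, Lemma~\ref{lem:mustep} gives $\frac{L_{\max}}{2}\alpha_\ell + c\alpha_\ell^{p-1} \ge \epsilon/(1+\mathcal{C}_1)$, which forces $\alpha_\ell \ge C_2\, \epsilon^{1/\min(p-1,1)}$ for a constant $C_2$ depending on $L_{\max}, c, p, \mathcal{C}_1$ (split into the cases $p\ge 2$, where the linear term dominates for small $\epsilon$, and $1<p<2$, where the $\alpha^{p-1}$ term dominates — in either case the binding term gives the exponent $1/\min(p-1,1)$). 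Then one propagates this bound from unsuccessful to successful iterations using $\beta_1\le\beta_2<1\le\gamma$.

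Substituting $\alpha_k^{pm} \ge (C_2\,\epsilon^{1/\min(p-1,1)})^{pm} = C_2^{pm}\,\epsilon^{pm/\min(p-1,1)}$ into the hypervolume inequality gives
\[
|S_{k_0}(j_1)| \;\le\; \frac{(F^{\max}-F^{\min})^m}{c^m\, C_2^{pm}}\;\epsilon^{-\frac{pm}{\min(p-1,1)}} \;=\; \mathcal{O}\!\left(\epsilon^{-\frac{pm}{\min(p-1,1)}}\right),
\]
which is the claimed bound. The main obstacle I anticipate is the bookkeeping in the middle step: transferring the stepsize lower bound from the unsuccessful iterations (where Lemma~\ref{lem:mustep} is available) to \emph{all} successful iterations in $S_{k_0}(j_1)$, since in the multiobjective DMS setting the incumbent $x_k$ is re-selected from a list and the "linked sequences" structure complicates the clean single-objective argument that between two unsuccessful iterations the stepsize cannot have been driven too low. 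I would handle this by working along the dynamics of the stepsize attached to each point in the list and using that $\alpha_k$ only decreases (by bounded factors) at unsuccessful iterations, so a successful iteration's stepsize is bounded below by $\beta_1$ times the stepsize at the preceding unsuccessful iteration on the same linked sequence; carefully, one may need to absorb a uniform factor depending only on $\beta_1$ into the constant, or alternatively bound $\sum_{k\in S_{k_0}(j_1)}\rho(\alpha_k)^m$ by first bounding the number of successful iterations with $\alpha_k$ above a threshold and separately those below, but the latter cannot be too numerous either because each unsuccessful iteration that shrinks the stepsize must be "paid for". I expect the cleanest route is the one used in~\cite{LNVicente_2013}, adapted componentwise.
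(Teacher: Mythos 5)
Your proposal is correct and follows essentially the same route as the paper: bound the total hypervolume increase by $(F^{\max}-F^{\min})^m$, use Lemma~\ref{lemma:hypervolume} at successful iterations, get the lower bound $\alpha_k > \mathcal{L}_2\,\epsilon^{1/\min(p-1,1)}$ at unsuccessful iterations from Lemma~\ref{lem:mustep} together with Assumption~\ref{assum:newassum}, and transfer it to successful iterations at the cost of a factor $\beta_1$ by backtracking along the stepsize dynamics. The "main obstacle" you flag is resolved in the paper exactly as you suggest — each stepsize at a successful iteration between two unsuccessful ones satisfies $\alpha_t \geq \beta_1 \alpha_{k_\ell}$ for some earlier unsuccessful $k_\ell$, and the hypervolume increments are then telescoped blockwise between consecutive unsuccessful iterations.
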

\begin{proof}
Let us assume that $\mu_{k}  > \epsilon$, for $k= k_0, \ldots,
j_1$. Using Assumption~\ref{assum:newassum} we have
\begin{equation}\label{eq:relation:mu:muD}
\epsilon \; < \; \mu_k \; = \; |\mu_k - \mu_{D_k}| + \mu_{D_k} \;
\leq \; (\mathcal{C}_1 + 1) \mu_{D_k}.
\end{equation}
Hence, we obtain $\mu_{D_k} > \epsilon / (1+\mathcal{C}_1)$.

In view of Lemma~\ref{lem:mustep}, for an unsuccessful iteration
$k$, we have
\begin{equation*}
\mu_{D_k}  \; \leq \;  \left( \frac{L_{\max}}{2} \alpha_k  +
\frac{\rho(\alpha_k)}{\alpha_k}\right).\end{equation*}
Thus,
\begin{equation*}
\frac{\epsilon}{1+\mathcal{C}_1}  \; < \;  \left(
\frac{L_{\max}}{2} \alpha_k +
\frac{\rho(\alpha_k)}{\alpha_k}\right),
\end{equation*}
which then implies, when $\alpha_k < 1$,
\[\epsilon \; < \; \mathcal{L}_1 \alpha_k^{\min (p-1, 1)},\]
where $\mathcal{L}_1 = (1+\mathcal{C}_1) \left(
\frac{L_{\max}}{2}+ c \right).$ If $\alpha_k \geq 1$, then
$\alpha_k > \epsilon$. Hence, by combining the two cases
($\alpha_k \geq 1$ and $\alpha_k < 1$) and having $\epsilon < 1$,
when $k$ is an unsuccessful iteration, we have
\begin{equation}\label{eq:lbalpha}
\alpha_k \; > \; \mathcal{L}_2\epsilon^{\frac{1}{\min(p-1, 1)}},
\end{equation}
where $\mathcal{L}_2= \min \left(1 ,
\mathcal{L}_1^{-\frac{1}{\min(p-1,1)}}\right).$

Let $k$ be a successful iteration and $U_{k_0}(k) =\{k_0,
k_1,\ldots, k_u\}$ with~$k_u < k$ be the set of unsuccessful
iterations from $k_0$ to $k$. From Lemma~\ref{lemma:hypervolume}
and by the choice of forcing function,
\begin{align*}
    \HI(F(L_{k+1})) - \HI(F(L_{k_u}))
    \; &\geq \; (k-k_u) (c(\min_{k_{u}+1 \leq t \leq k} \alpha_t)^p)^m \\
    \; &\geq \; |S_{k_u} (k)|( c \beta_ 1^p\alpha_{k_\ell}^p)^m, \quad \mbox{ for some } 0\leq \ell \leq u.
\end{align*}

Notice that the second inequality holds as it is possible to
backtrack from any  iteration $t\in\{k_{u}+1,\ldots,k\}$ to some
previous unsuccessful~$k_{\ell} \in U_{k_0}(k)$ iteration and have
$ \alpha_t\geq \beta_1\alpha_{k_{\ell}}$.

Thus, in view of~\eqref{eq:lbalpha},
\[\HI(F(L_{k+1})) - \HI(F(L_{k_u}))\; \geq \; |S_{k_u} (k)| \left(c\beta_1^p \mathcal{L}_2^p\epsilon^{\frac{p}{\min(p-1, 1)}}\right)^m.\]

By a similar reasoning, for $1\leq i \leq u$, we obtain
\begin{align*}
    \HI(F(L_{k_i})) - \HI(F(L_{k_{i-1}}))
    \; \geq \; |S_{k_{i-1}} (k_i)| \left(c\beta_1^p \mathcal{L}_2^p\epsilon^{\frac{p}{\min(p-1, 1)}}\right)^m.
\end{align*}

Therefore, using the two inequalities above for $k = k_0, \ldots,
j_1$, we obtain
\begin{align*}
    \HI(F(L_{j_1+1})) - \HI(F(L_{k_0})) \; &\geq \; |S_{k_0} (j_1)| \left(c\beta_1^p \mathcal{L}_2^p\epsilon^{\frac{p}{\min(p-1, 1)}}\right)^m.
\end{align*}
Since $\left(F^{\max} - F^{\min}\right)^m \geq \HI(F(L_{j_1+1})) -
\HI(F(L_{k_0})) $, the proof is completed. 
\end{proof}

Now, in order to obtain a bound on the total number of iterations
for driving $\mu_k$ below a given threshold, it remains to find a
bound on the number of unsuccessful iterations. For that, we will
adapt the definition of linked sequences, introduced
in~\cite{GLiuzii_et_al_2016}.

\begin{definition} Consider $\{L_k\}_{k\in\mathbb{N}}$ the
sequence of sets of nondominated points generated by
Algorithm~\ref{alg:dms}. A linked sequence between iterations $i$
and $j$ ($i<j$) is a finite sequence
$\{(x_{l_k},\alpha_{l_k})\}_{k\in\{1,\ldots,n_l\}}$ of maximum
length such that $n_l\leq j-i+1$, $(x_{l_k},\alpha_{l_k})\in
\cup_{r=i}^j L_r$ for all $k\in\{1,\ldots,n_l\}$, and for any
$k\in\{2,\ldots,n_l\}$ and $r\in\{i+1,\ldots,j\}$, the pair
$(x_{l_k},\alpha_{l_k})\in L_r$ is generated by
$(x_{l_{k-1}},\alpha_{l_{k-1}})\in L_{p}$, with $i\leq p<r$.
\end{definition}

Theorem~\ref{thm:bousi1} establishes a bound on the number of
unsuccessful iterations required for driving $\mu_k$ below a given
threshold.

\begin{theorem}\label{thm:bousi1}
Let all the assumptions of Theorem~\ref{thm:bosi1} hold. Then, to
achieve $\mu_k \leq \epsilon$ starting from $k_0$,
Algorithm~\ref{alg:dms} takes at most
\[| U_{k_0}(j_1)| \; \leq \; |\mathbb{L}_{k_0}^{j_1}|\left\lceil - \frac{\log(\gamma) }{ \log(\beta_2) }|S_{k_0}(j_1)| - \frac{ \log(\alpha_{l_1})}{ \log(\beta_2) } +\frac{\log{\left( \beta_1 \mathcal{L}_2
\epsilon^{{\frac{1}{\min(p-1,1)}}}
\right)}}{{\log(\beta_2)}}\right\rceil\] unsuccessful iterations,
where $\alpha_{l_1}$ denotes the stepsize associated to one of the
points in $L_{k_0}$ and $|\mathbb{L}_{k_0}^{j_1}|$ the cardinality
of the set of all linked sequences between $k_0$ and $j_1$.
\end{theorem}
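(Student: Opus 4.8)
The plan is to bound the number of unsuccessful iterations along each individual linked sequence, and then sum over all linked sequences between $k_0$ and $j_1$. Fix a linked sequence $\{(x_{l_k},\alpha_{l_k})\}_{k\in\{1,\dots,n_l\}}$, where $(x_{l_1},\alpha_{l_1})$ is the initial pair, taken from $L_{k_0}$. Along this sequence, track how the stepsize evolves: at a successful step the stepsize is multiplied by at most $\gamma$, and at an unsuccessful step it is multiplied by at most $\beta_2 < 1$. Hence after passing through $s$ successful and $u$ unsuccessful steps along the sequence, the terminal stepsize $\alpha$ satisfies $\alpha \leq \alpha_{l_1}\,\gamma^{\,s}\,\beta_2^{\,u}$. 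On the other hand, Theorem~\ref{thm:bosi1} (more precisely the lower bound~\eqref{eq:lbalpha} that was established in its proof for unsuccessful iterations, together with the definition of $j_1$) guarantees that as long as $\mu_k > \epsilon$ — which holds for every unsuccessful iteration up to $j_1$ — the stepsize at any unsuccessful iteration is bounded below by $\mathcal{L}_2\,\epsilon^{1/\min(p-1,1)}$, and after an unsuccessful step it is still at least $\beta_1\mathcal{L}_2\,\epsilon^{1/\min(p-1,1)}$.

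Combining the two bounds: if the linked sequence contains $u$ unsuccessful iterations in $U_{k_0}(j_1)$, then the stepsize just after the last of them is at least $\beta_1\mathcal{L}_2\epsilon^{1/\min(p-1,1)}$, yet it is at most $\alpha_{l_1}\gamma^{s}\beta_2^{u}$ with $s \leq |S_{k_0}(j_1)|$. Taking logarithms (and recalling $\log\beta_2 < 0$, so the inequality flips) yields
\[
u \; \leq \; -\frac{\log\gamma}{\log\beta_2}\,|S_{k_0}(j_1)| \;-\; \frac{\log\alpha_{l_1}}{\log\beta_2} \;+\; \frac{\log\!\left(\beta_1\mathcal{L}_2\epsilon^{1/\min(p-1,1)}\right)}{\log\beta_2},
\]
and since $u$ is an integer we may take the ceiling of the right-hand side. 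This is exactly the per-linked-sequence bound in the statement. Finally, every unsuccessful iteration in $U_{k_0}(j_1)$ belongs to at least one linked sequence between $k_0$ and $j_1$ (each iterate pair lies in some $L_r$, and linked sequences are extended to maximal length), so $|U_{k_0}(j_1)|$ is at most $|\mathbb{L}_{k_0}^{j_1}|$ times the common per-sequence bound, giving the claimed inequality.

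The main obstacle I anticipate is the bookkeeping needed to justify the stepsize-evolution inequality $\alpha \leq \alpha_{l_1}\gamma^{s}\beta_2^{u}$ along a linked sequence: one has to be careful that, in Algorithm~\ref{alg:dms}, when a pair is ``generated by'' a predecessor pair, the stepsize is inherited through the update rule (multiplied by a factor in $[\beta_1,\beta_2]$ at unsuccessful iterations, in $[1,\gamma]$ at successful ones, or simply carried over), and that counting successful/unsuccessful iterations \emph{along the sequence} is dominated by counting them in $S_{k_0}(j_1)$ and $U_{k_0}(j_1)$ respectively. A secondary point requiring care is that the lower bound on $\alpha_k$ from~\eqref{eq:lbalpha} was derived under $\mu_{D_k} > \epsilon/(1+\mathcal{C}_1)$, which holds precisely because $\mu_k > \epsilon$ for all unsuccessful iterations strictly before $j_1+1$; one must invoke the hypothesis $\mu_{k_0} > \epsilon$ and the minimality of $j_1$ to be sure this applies to every unsuccessful iteration being counted.
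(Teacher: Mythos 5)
Your proposal is correct and follows essentially the same route as the paper's proof: decompose $U_{k_0}(j_1)$ over the linked sequences, bound the unsuccessful count per sequence via the stepsize recursion $\alpha_{l_{n_l}} \leq \alpha_{l_1}\gamma^{|S|}\beta_2^{|U|}$ combined with the lower bound~\eqref{eq:lbalpha} (with the extra $\beta_1$ factor), take logarithms using $\log\beta_2<0$, bound $|S_{k_0}(n_l)|$ by $|S_{k_0}(j_1)|$, and multiply by $|\mathbb{L}_{k_0}^{j_1}|$. The two points you flag as needing care (stepsize inheritance along a linked sequence, and minimality of $j_1$ ensuring $\mu_k>\epsilon$ at the counted unsuccessful iterations) are exactly the ingredients the paper's argument relies on.
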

\begin{proof}
Let $\{(x_{l_k},\alpha_{l_k})\}_{k\in\{1,\ldots,n_l\}}$ be a
linked sequence between $k_0$ and $j_1$. Let $S_{k_0}(n_l)$ and
$U_{k_0}(n_l)$ be, respectively, the set of successful and
unsuccessful iterations in the sequence. Assume $U_{k_0}(n_l)\neq
\emptyset$. Since, for any $1\leq k<n_l-1$, either
 $\alpha_{l_{k+1}} \leq \beta_2 \alpha_{l_k}$  (if the iteration is unsuccessful) or $\alpha_{l_{k+1}} \leq \gamma \alpha_{l_k}$  (if the iteration is successful),  we obtain by induction
 \[\alpha_{l_{n_l}} \; \leq \; \alpha_{l_1}\gamma^{|S_{k_0}(n_l)|}
 \beta_2^{|U_{k_0}(n_l)|},\]
 which, in turn, implies from $\log (\beta_2) < 0$
\[|U_{k_0}(n_l)| \; \leq \; - \frac{ \log(\gamma) }{\log(\beta_2) }|S_{k_0}(n_l)|-
\frac{\log(\alpha_{l_1}) }{ \log(\beta_2) } +
\frac{\log(\alpha_{l_{n_l}}) }{ \log(\beta_2) }.\] From $\log
(\beta_2) < 0$ and the lower bound~\eqref{eq:lbalpha} on
$\alpha_k$, we obtain
\[|U_{k_0}(n_l)| \; \leq \; - \frac{ \log(\gamma) }{\log(\beta_2) }|S_{k_0}(n_l)|- \frac{
\log(\alpha_{l_1}) }{ \log(\beta_2) } + \frac{\log\left( \beta_1
\mathcal{L}_2 \epsilon^{{\frac{1}{\min(p-1,1)}}} \right) }{
\log(\beta_2) }.\] The last inequality holds trivially when
$U_{k_0}(n_l)=\emptyset$.

Let $\mathbb{L}_{k_0}^{j_1}$ denote the set of indexes of all
linked sequences between $k_0$ and $j_1$. Then,
\begin{align*}
| U_{k_0}(j_1)| &\leq \; \sum_{l\in \mathbb{L}_{k_0}^{j_1}} |U_{k_0}(n_l)| \\
\; &\leq \; |\mathbb{L}_{k_0}^{j_1}| \max_{l\in \mathbb{L}_{k_0}^{j_1}}|U_{k_0}(n_l)|\\
\; &\leq \; |\mathbb{L}_{k_0}^{j_1}| \max_{l\in
\mathbb{L}_{k_0}^{j_1}} \left[- \frac{ \log(\gamma) }{
\log(\beta_2) }|S_{k_0}(n_l)|- \frac{ \log(\alpha_{l_1}) }{
\log(\beta_2) } + \frac{ \log\left( \beta_1 \mathcal{L}_2
\epsilon^{{\frac{1}{\min(p-1,1)}}} \right) }{ \log(\beta_2)
}\right].
\end{align*}
Since $n_l \le j_1-k_0+1$, one has $|S_{k_0}(n_l)| \le
|S_{k_0}(j_1)|$ for all $l\in \mathbb{L}_{k_0}^{j_1}$. Thus,
\begin{align*}
| U_{k_0}(j_1)| \; &\leq \; |\mathbb{L}_{k_0}^{j_1}| \left[-
\frac{ \log(\gamma) }{ \log(\beta_2) }|S_{k_0}(j_1)|- \frac{
\log(\alpha_{l_1}) }{ \log(\beta_2) } + \frac{ \log\left( \beta_1
\mathcal{L}_2 \epsilon^{{\frac{1}{\min(p-1,1)}}} \right) }{
\log(\beta_2) }\right].
\end{align*}
\end{proof}

In the previous bound, the size of the second term in the sum can
be easily bounded. In fact, from Theorem~\ref{thm:bosi1} we know
that there is a finite number of successful iterations, before
driving $\mu$ below the given threshold. The increase in the
stepsize can be controlled by setting $\gamma = 1$ or by
considering an upper bound for the stepsize itself
(see~\cite{MDodangeh_LNVicente_2014}).

Combining Theorems~\ref{thm:bosi1} and~\ref{thm:bousi1}, it can be
seen that Algorithm~\ref{alg:dms} takes at most
$\mathcal{O}\left(|L(\epsilon)|\epsilon^{-{\frac{pm}{\min(p-1,
1)}}}\right)$ iterations to bring $\mu_k < \epsilon$ for some
$k\geq 0$, where $|L(\epsilon)|$ represents the size of the set of
linked sequences between the first unsuccessful iteration and the
iteration immediately before the one where the criticality
condition is satisfied. The best complexity bound is then derived
by setting $p=2$, which leads to the complexity bound of
$\mathcal{O}\left(|L(\epsilon)|\epsilon^{-2m}\right)$. The WCC
bounds of Algorithm~\ref{alg:dms} in terms of the number of
function evaluations is established in the following corollary.

\begin{corol}\label{cor:totalbound}
Let all the assumptions of Theorem~\ref{thm:bosi1} hold. To
achieve\linebreak$\mu_k < \epsilon$, Algorithm~\ref{alg:dms} takes
at most
$\mathcal{O}\left(|L(\epsilon)|\epsilon^{-{\frac{pm}{\min(p-1,
1)}}}\right)$ iterations (and\linebreak
$\mathcal{O}\left(n|L(\epsilon)|\epsilon^{ -{\frac{pm}{\min(p-1,
1)}}}\right)$ function evaluations). When $p=2$ this bound
is\linebreak $\mathcal{O}\left(|L(\epsilon)|\epsilon^{-2m}\right)$
(and $\mathcal{O}\left(n |L(\epsilon)|\epsilon^{-2m}\right)$
function evaluations).
\end{corol}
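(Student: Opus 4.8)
The plan is to obtain the total iteration count by simply summing the three pieces into which a run of Algorithm~\ref{alg:dms}, up to the first index $k$ with $\mu_k<\epsilon$, naturally splits: the iterations before $k_0$, the successful iterations between $k_0$ and $j_1$, and the unsuccessful iterations between $k_0$ and $j_1$. Once the first two parts of the corollary (the iteration bounds for general $p$) are in hand, the $p=2$ statement follows by elementary minimization, and the function-evaluation bounds follow by multiplying by the per-iteration cost.

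First I would dispose of the iterations preceding $k_0$. By definition $k_0$ is the first unsuccessful iteration, so iterations $0,\dots,k_0-1$ are all successful, and by the stepsize update rule the stepsize is never decreased along them, whence $\alpha_k\ge\alpha_0$ for every $k\le k_0$. Lemma~\ref{lemma:hypervolume} then gives an increase of at least $(\rho(\alpha_0))^m=(c\alpha_0^p)^m$ in the hypervolume at each such iteration, while Assumption~\ref{assum:lbup} caps the total hypervolume increase by $(F^{\max}-F^{\min})^m$. Hence there are at most $(F^{\max}-F^{\min})^m/(c\alpha_0^p)^m$ iterations before $k_0$, a constant independent of $\epsilon$, which is absorbed by the $\mathcal{O}(\cdot)$.

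Next I would invoke the two main theorems. Theorem~\ref{thm:bosi1} yields $|S_{k_0}(j_1)|=\mathcal{O}(\epsilon^{-pm/\min(p-1,1)})$. Theorem~\ref{thm:bousi1} then bounds $|U_{k_0}(j_1)|$ by $|\mathbb{L}_{k_0}^{j_1}|$ times an affine function of $|S_{k_0}(j_1)|$ and of $\log(1/\epsilon)$-type terms; since a positive power of $1/\epsilon$ dominates $\log(1/\epsilon)$ as $\epsilon\downarrow 0$ and since $|\mathbb{L}_{k_0}^{j_1}|\ge 1$, this gives $|U_{k_0}(j_1)|=\mathcal{O}(|\mathbb{L}_{k_0}^{j_1}|\,\epsilon^{-pm/\min(p-1,1)})$. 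Adding the three contributions and writing $|L(\epsilon)|:=|\mathbb{L}_{k_0}^{j_1}|$ yields the claimed iteration bound. For the function-evaluation count, each iteration evaluates $F$ at the finitely many poll points $\{x_k+\alpha_k d: d\in D_k\}$ (plus, optionally, the search points, which we take to number $\mathcal{O}(n)$ as well, or none); for the standard choices of poll directions (minimal or maximal positive bases) one has $|D_k|=\mathcal{O}(n)$, so multiplying the iteration bound by $\mathcal{O}(n)$ gives the function-evaluation bound.

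Finally, to pin down the best value of $p>1$, I would minimize the exponent $g(p):=pm/\min(p-1,1)$: on $(1,2]$ one has $\min(p-1,1)=p-1$ and $g(p)=mp/(p-1)$ is strictly decreasing, while on $[2,+\infty)$ one has $\min(p-1,1)=1$ and $g(p)=mp$ is strictly increasing; hence $g$ attains its minimum $g(2)=2m$ at $p=2$, and substituting $p=2$ in both bounds gives the stated $\mathcal{O}(|L(\epsilon)|\epsilon^{-2m})$ and $\mathcal{O}(n|L(\epsilon)|\epsilon^{-2m})$ expressions. The only point requiring genuine care is the passage in the third paragraph: one must check that the logarithmic terms in the bound of Theorem~\ref{thm:bousi1}—in particular the term carrying $\log\gamma$, which is controlled by taking $\gamma=1$ or by imposing an upper bound on the stepsize, and the $\log\alpha_{l_1}$ term—are of strictly lower order than $|S_{k_0}(j_1)|$, so that they do not spoil the polynomial rate; everything else is bookkeeping.
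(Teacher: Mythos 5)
Your proposal is correct and follows essentially the same route as the paper: the corollary is simply the combination of Theorems~\ref{thm:bosi1} and~\ref{thm:bousi1} (the affine-in-$|S_{k_0}(j_1)|$ bound on $|U_{k_0}(j_1)|$ being absorbed since the logarithmic terms are lower order and $|\mathbb{L}_{k_0}^{j_1}|\ge 1$), the optimization of the exponent $pm/\min(p-1,1)$ at $p=2$, and the $\mathcal{O}(n)$ per-iteration polling cost. Your explicit hypervolume argument bounding the iterations before $k_0$ by a constant independent of $\epsilon$ is a small piece of bookkeeping the paper leaves implicit, and it is consistent with the paper's own use of Lemma~\ref{lemma:hypervolume} and Assumption~\ref{assum:lbup}.
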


\begin{remark}\label{rem:numfunc}
With regard to the number of function evaluations in
Corollary~\ref{cor:totalbound}, since the computational cost of
evaluating each component of the objective function might not be
the same, we have considered the computational cost of $F$ and
count the number of times that it is evaluated at each iteration,
rather than counting separately the number of component functions
evaluations.
\end{remark}

One can see that the bound for DMS, in terms of $\epsilon$, does
not conform with the bound $\mathcal{O}\left(\epsilon^{-2}\right)$
for the gradient descent derived
in~\cite{JFliege_AIFVaz_LNVicente_2019} for
problem~\eqref{pr:MOO}. One of the main reasons behind this
difference is the fact that DMS declares an iteration as
successful if at least one of the components of the objective
function could be improved sufficiently, whereas
in~\cite{JFliege_AIFVaz_LNVicente_2019} the algorithm, which uses
a backtracking approach for determining the right stepsize
parameter, moves to a new point if all the components of the
objective function could be improved sufficiently.  It should also
be noted that DMS will compute an approximation to a complete
local Pareto front, whereas the multiobjective gradient descent
algorithm finds a single Pareto critical point. This explains the
dependence on $|L(\epsilon)|$ for the WCC bounds derived.

If a more demanding criterion is considered to accept new
nondominated points, a complexity bound identical, in terms of
$\epsilon$, to the one derived
in~\cite{JFliege_AIFVaz_LNVicente_2019} for the gradient descent
can be established for DMS. In the next section, we will propose a
direct-search framework, which corresponds to a particular
instance of DMS, and presents a worst-case complexity bound of
$\mathcal{O}\left(n\epsilon^{-2}\right)$, when considering the
number of function evaluations.

\section{A Min-Max Direct-Search Framework for Multiobjective Optimization}\label{sec_mo_ds}
In this section, instead of considering problem~\eqref{pr:MOO}
directly, we use a min-max formulation:
\begin{equation*}
\min_{x\in\mathbb{R}^n} f(x)
\end{equation*}
with
\begin{equation*}
f(x):= \max_{i\in I} f_i(x).
\end{equation*}

Algorithm~\ref{alg:DS:MO} considers a Direct-Search (DS) approach
with a stricter criterion for accepting new nondominated points.
In this case, rather than an approximation to the complete Pareto
front, only one Pareto critical point will be computed for
problem~(\ref{pr:MOO}). For simplicity, the forcing function
$\rho(t) = \frac{c}{2}t^2$, with $c>0$, has been considered and
the (optional) search step has not been included in the
algorithmic description. However, the subsequent results could be
established for a more general setting, such as the one of
Algorithm~\ref{alg:dms}, once that the strict condition for
accepting new nondominated points is used.

\begin{algorithm}
    \begin{rm}
        \begin{description}
            \item[Initialization] \ \\
            Choose $x_0\in\mathbb{R}^n$ with $f_i(x_0)<+\infty,\forall i
            \in I$, $\alpha_0>0$ an initial stepsize, $0 < \beta_1
            \leq \beta_2 < 1$ the coefficients for stepsize contraction and
            $\gamma \geq 1$ the coefficient for stepsize expansion. Let
            $\mathcal{D}$ be a set of positive spanning sets and $c > 0$ a constant used in the sufficient decrease condition.
            \item[For $k=0,1,2,\ldots$] \ \\
            \begin{enumerate}
                \item {\bf Poll step:} Choose a positive spanning set~$D_k$ from the set
                $\mathcal{D}$. Evaluate $F$ at the poll points belonging
                to $\{ x_k+\alpha_k d: \, d \in D_k \}$. If it exists $d_k \in D_k$ such that
        \begin{equation*}
        f(x_k+\alpha_k d_k) < f(x_k) - \frac{c}{2}\alpha_k^2,
        \end{equation*}
        then declare the iteration as successful and set
        $x_{k+1}=x_k+\alpha_k d_k$. Otherwise, declare the
        iteration as unsuccessful and set $x_{k+1}=x_k$.
\vspace{1ex} \item {\bf Stepsize parameter
                    update:} If the iteration was successful then maintain or increase
                the corresponding stepsize parameter, by considering
                $\alpha_{k+1}$ $\in [\alpha_k, \gamma \alpha_k]$.\\
                Otherwise decrease the stepsize parameter, by choosing
                $\alpha_{k+1} \in [\beta_1 \alpha_k, \beta_2 \alpha_k]$.\vspace{1ex}
            \end{enumerate}
        \end{description}
    \end{rm}
     \caption{Min-max DS for multiobjective optimization.}
    \label{alg:DS:MO}
\end{algorithm}

Algorithm~\ref{alg:DS:MO} can be regarded as a particular instance
of Algorithm~\ref{alg:dms}, where no search step is performed, the
list $L_k$ is a singleton, corresponding to the current iterate
and stepsize parameter $(x_k;\alpha_k)$, with a particular choice
of $L_{trial}$ as a subset of the set of computed nondominated
points. Fig.~\ref{fig:dominance} illustrates the latter claim for
a biobjective optimization problem. Consider\linebreak
$F(x_k)=(f_1^k,f_2^k)$ as the objective function value at the
current iterate and $\rho(\alpha_k)$ as the current value of the
forcing function. The shaded region corresponds to the image of
the subset of nondominated points, from which a new iterate can be
selected. This set is a subset of the corresponding set in
Algorithm~\ref{alg:dms} (represented by the hatch-lined area).
Such restriction leads to a better worst-case complexity bound,
comparing to the general formulation of DMS.

\begin{figure}
\begin{center}
        \begin{tikzpicture}[thick,scale=0.65, every node/.style={transform shape}]
        \draw [very thick,->] (0,0) -- (8,0) node [below] {$f_1$};
        \draw [very thick,->] (0,0) -- (0,8) node [left] {$f_2$};
        \draw[dashed] (3,5) --  (3,4.5)  node [right=6pt,pos=0.6] {$\rho(\alpha_k)$};
        \draw[decoration={brace,raise=4pt},decorate](2.92,4.95) -- node[right=3pt] {} (2.92,4.52);
        \fill (3,5)  circle[radius=2pt] node [above right] {$(f_1^k,f_2^k)$};
        \fill[gray!60!white](0,4.5)-- (4.5,4.5)-- (4.5,0)-- (0,0)--cycle;
        \draw[dashed] (0,4.5) -- (4.5,4.5);
        \draw[dashed] (4.5,4.5) -- (4.5,0);
       \pattern[pattern=north east lines] (0,4.5)--(0,0)--(4.5,0)--(4.5,4.5)--cycle;
       \pattern[pattern=north east lines] (0,4.5)--(2.5,4.5)--(2.5,7.8)--(0,7.8)--cycle;
       \pattern[pattern=north east lines] (4.5,0)--(4.5,4.5)--(7.8,4.5)--(7.8,0)--cycle;
        \draw[dashed] (2.5,4.5)--(2.5,7.8);
        \draw[dashed] (4.5,4.5)--(7.8,4.5);
    \end{tikzpicture}
\caption{\label{fig:dominance}Selecting a new nondominated point
in the min-max direct-search framework.}
\end{center}
\end{figure}
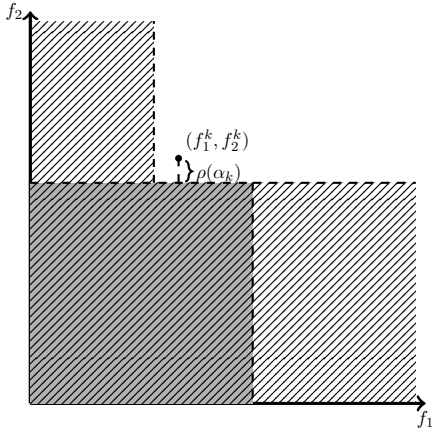

A result similar to Lemma~\ref{lem:mustep} can be established for
Algorithm~\ref{alg:DS:MO}.
\begin{lemma} \label{lemma:centralized:unsucc}
Under Assumption~\ref{assum:centralized:pbC11}, suppose that the
$k$-th iteration of Algorithm~\ref{alg:DS:MO} is unsuccessful. Let
$D_k$ be the positive spanning set considered, and $\alpha_k >0$
be the corresponding stepsize. Then
\begin{equation*}\label{uu}
\mu_{D_k}(x_k) \; \leq \;
\frac{1}{2}\left(L_{\max}\,+c\right)\alpha_k .
\end{equation*}
\end{lemma}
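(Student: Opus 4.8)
The plan is to mimic the proof of Lemma~\ref{lem:mustep}, but exploiting the fact that Algorithm~\ref{alg:DS:MO} uses the strict min-max sufficient-decrease test rather than the componentwise Pareto test. At an unsuccessful iteration $k$, for every poll direction $d_k \in D_k$ we have $f(x_k + \alpha_k d_k) \geq f(x_k) - \frac{c}{2}\alpha_k^2$, i.e. $\max_{i\in I} f_i(x_k+\alpha_k d_k) \geq \max_{i\in I} f_i(x_k) - \frac{c}{2}\alpha_k^2$. The key observation — and this is what differs from the DMS case — is that I can pick a single index $j$ that attains the maximum of $f$ at the \emph{trial} point, $f(x_k+\alpha_k d_k) = f_j(x_k+\alpha_k d_k)$, and then estimate
\[
f_j(x_k+\alpha_k d_k) - f_j(x_k) \;\geq\; f(x_k+\alpha_k d_k) - f(x_k) \;\geq\; -\tfrac{c}{2}\alpha_k^2,
\]
where the first inequality uses $f_j(x_k) \leq \max_{i\in I} f_i(x_k) = f(x_k)$. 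So there is an index $j = j(d_k)$ with $f_{j}(x_k+\alpha_k d_k) - f_{j}(x_k) \geq -\frac{c}{2}\alpha_k^2$.

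\textbf{Second}, I would run the same Taylor/integral argument as in Lemma~\ref{lem:mustep} on this component $f_j$. Writing $f_j(x_k+\alpha_k d_k) - f_j(x_k) = \int_0^1 \nabla f_j(x_k + t\alpha_k d_k)^\top \alpha_k d_k\, dt$, adding $-\alpha_k \nabla f_j(x_k)^\top d_k$ to both sides and using the Lipschitz continuity of $\nabla f_j$ with constant $L_{\max}$ (Assumption~\ref{assum:centralized:pbC11}), together with $\|d_k\| \leq 1$, yields
\[
-\alpha_k \nabla f_j(x_k)^\top d_k \;\leq\; \frac{L_{\max}}{2}\alpha_k^2 \|d_k\|^2 + \frac{c}{2}\alpha_k^2 \;\leq\; \frac{1}{2}(L_{\max}+c)\alpha_k^2,
\]
hence $-\nabla f_j(x_k)^\top d_k \leq \frac{1}{2}(L_{\max}+c)\alpha_k$. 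Since $j = j(d_k)$ is one particular index, we certainly have $\max_{i\in I} \nabla f_i(x_k)^\top d_k \geq \nabla f_j(x_k)^\top d_k \geq -\frac{1}{2}(L_{\max}+c)\alpha_k$.

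\textbf{Finally}, since this lower bound on $\max_{i\in I}\nabla f_i(x_k)^\top d_k$ holds for \emph{every} $d_k \in D_k$ (with $\|d_k\|\le 1$, recalling the normalization convention on the positive spanning sets), we may take the minimum over $d \in D_k$ with $\|d\|\le 1$ and negate, obtaining exactly $\mu_{D_k}(x_k) = -\min_{d\in D_k,\|d\|\le 1}\max_{i\in I}\nabla f_i(x_k)^\top d \leq \frac{1}{2}(L_{\max}+c)\alpha_k$, which is the claim.

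\textbf{The main subtlety} — not so much an obstacle as the point where care is needed — is the index-selection step: in the DMS setting of Lemma~\ref{lem:mustep} the unsuccessful test only guarantees that \emph{some} component fails to decrease sufficiently, giving the weaker bound with the $\rho(\alpha_k)/\alpha_k$ term that degrades with the forcing function. Here, because the min-max test forces $f(x_k+\alpha_k d_k)$ itself not to decrease by more than $\frac{c}{2}\alpha_k^2$, and because $f$ equals the component $f_j$ achieving the max at the trial point while $f_j(x_k) \le f(x_k)$, the same bound transfers verbatim to a \emph{differentiable} component $f_j$; this is precisely the use of differentiability of the single components alluded to in the introduction. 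One should also note that the choice $\rho(t) = \frac{c}{2}t^2$ makes $\rho(\alpha_k)/\alpha_k = \frac{c}{2}\alpha_k$ linear in $\alpha_k$, which is why the bound comes out clean and linear, matching the single-objective $\mathcal{O}(\epsilon^{-2})$ regime.
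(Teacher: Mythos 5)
Your proposal is correct and follows essentially the same route as the paper: the paper's proof likewise extracts, for each poll direction, a single component index $i(d_k)$ with $f_{i(d_k)}(x_k+\alpha_k d_k) \geq f_{i(d_k)}(x_k) - \frac{c}{2}\alpha_k^2$ (your max-at-the-trial-point selection just makes this step explicit) and then reuses the Taylor/Lipschitz argument of Lemma~\ref{lem:mustep} with $\rho(\alpha_k)/\alpha_k = \frac{c}{2}\alpha_k$. No gaps.
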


\begin{proof}
If iteration $k$ is unsuccessful then for all directions $d_k\in
D_k$
\begin{equation*}
f(x_k+\alpha_k d_k) \; \geq \;  f(x_k) - \frac{c}{2}\alpha_k^2.
\end{equation*}
Hence, there exists $i(d_k)\in I$ such that
\begin{equation*}
f_{i(d_k)}(x_k+\alpha_k d_k) \; \geq \;   f_{i(d_k)}(x_k) -
\frac{c}{2}\alpha_k^2.
\end{equation*}
The remainder of the proof is similar to the one of
Lemma~\ref{lem:mustep}.
\end{proof}

The following lemma states that the sequence $\sum_{k=0}^{\infty}
\alpha_k^2$ is finite, where $\{\alpha_k\}_{k\geq 0}$ is generated
by Algorithm~\ref{alg:DS:MO}. The proof is identical to the one
of~\cite[Lemma 4.1]{SGratton_et_al_2015}, but we include it for
completeness.
\begin{lemma} \textnormal{\cite[Lemma 4.1]{SGratton_et_al_2015}} \label{lemma:centralized:cvalpha}
Under Assumption~\ref{assum:lbup}, the sequence of
$\{\alpha_k\}_{k\geq 0}$ generated by Algorithm~\ref{alg:DS:MO}
satisfies
\begin{equation*} \label{eq:centralized:cvalpha}
\sum_{k=0}^{\infty} \alpha_k^2 \; \le \; \Omega \; := \;
\frac{\gamma^2}{1-\beta_2^2}\left( \gamma^{-2}\alpha_0^2 +
\frac{2}{c}(f(x_0)-F^{\min})\right),
\end{equation*}
where $\gamma,\beta_2,c$ are defined in Algorithm~\ref{alg:DS:MO}.
\end{lemma}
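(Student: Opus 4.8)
The plan is to follow the argument of \cite[Lemma 4.1]{SGratton_et_al_2015}: first show that the stepsizes at successful iterations are square-summable by a telescoping argument over the objective decrease, and then control the stepsizes at unsuccessful iterations by geometric series rooted at the preceding successful iteration.

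First I would record two elementary facts. Since at an unsuccessful iteration $x_{k+1}=x_k$ and at a successful one $f(x_{k+1})<f(x_k)-\tfrac{c}{2}\alpha_k^2<f(x_k)$, the sequence $\{f(x_k)\}$ is non-increasing; moreover every iterate stays in $\{x:F(x)\notin D(\{x_0\})\}$ (indeed, if $F(x_0)\prec_F F(x_k)$ then $f(x_k)=\max_i f_i(x_k)\ge\max_i f_i(x_0)=f(x_0)$, which together with monotonicity can only occur before any successful iteration, and then $x_k=x_0$), so by Assumption~\ref{assum:lbup} one gets $f(x_k)\ge F^{\min}$ for all $k$. Summing $\tfrac{c}{2}\alpha_k^2<f(x_k)-f(x_{k+1})$ over the set $\mathcal{S}$ of successful iterations and telescoping (the terms with $k\notin\mathcal{S}$ vanish) gives
\[\sum_{k\in\mathcal{S}}\alpha_k^2\;\le\;\frac{2}{c}\bigl(f(x_0)-F^{\min}\bigr).\]

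Next, for each $k\ge 0$ I would bound $\alpha_k^2$ in terms of a preceding successful stepsize. If no iteration in $\{0,\dots,k-1\}$ is successful, then $\alpha_k\le\beta_2^{\,k}\alpha_0$, since each such iteration contracts the stepsize. Otherwise, letting $\ell(k)$ denote the last successful iteration before $k$, all of $\ell(k)+1,\dots,k-1$ are unsuccessful, so $\alpha_{\ell(k)+1}\le\gamma\alpha_{\ell(k)}$ and then $k-\ell(k)-1$ contractions yield $\alpha_k\le\gamma\beta_2^{\,k-\ell(k)-1}\alpha_{\ell(k)}$. Summing over $k$ and regrouping according to whether $\ell(k)$ exists, and in that case by the value of $\ell(k)$ (for fixed $j\in\mathcal{S}$ the indices $k$ with $\ell(k)=j$ run over $j+1,j+2,\dots$ up to and including the next successful iteration), each geometric series contributes at most $\tfrac{1}{1-\beta_2^2}$ times $\alpha_0^2$ or $\gamma^2\alpha_j^2$, respectively, whence
\[\sum_{k\ge 0}\alpha_k^2\;\le\;\frac{\alpha_0^2}{1-\beta_2^2}\;+\;\frac{\gamma^2}{1-\beta_2^2}\sum_{j\in\mathcal{S}}\alpha_j^2.\]
Combining this with the bound on $\sum_{k\in\mathcal{S}}\alpha_k^2$ and factoring $\gamma^2/(1-\beta_2^2)$ out of the two terms reproduces exactly $\Omega$.

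The routine parts (the two geometric-series estimates and the telescoping) are immediate; the point that needs care is the bookkeeping of the regrouping, namely ensuring that each successful stepsize enters the final bound exactly once and with coefficient $\gamma^2/(1-\beta_2^2)$ rather than $1+\gamma^2/(1-\beta_2^2)$ — which is what makes the constant collapse to the stated $\Omega$ — together with the small verification that the iterates remain in the set on which the components $f_i$ are bounded below.
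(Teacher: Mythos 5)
Your proof is correct and follows essentially the same route as the paper's: a telescoping bound on $\sum_{k\in\mathcal{S}}\alpha_k^2$ via the sufficient decrease condition, followed by geometric-series bounds on the stepsizes between consecutive successful iterations, with your explicit initial segment playing the role of the paper's artificial index $\alpha_{-1}=\gamma^{-1}\alpha_0$. Your added verification that the iterates stay in the region where Assumption~\ref{assum:lbup} applies is a small refinement the paper leaves implicit, but it does not change the argument.
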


\begin{proof}
We begin by proving that the series $\sum_{k \in S} \alpha_k^2$ is
finite. To this end, recall that for every successful iteration,
we have
\[f(x_k) - f(x_{k+1}) \ge \frac{c}{2}\alpha_k^2.\]
Moreover, denoting by $S$ the set corresponding to the indexes of
successful iterations, since the iterate does not change between
two successful iterations, we also have for any $K \ge 0$:
\[\sum_{\substack{k \in S \\ k \le K}} f(x_k) - f(x_{k+1}) = \sum_{k \le K} f(x_k) - f(x_{k+1}) = f(x_0)-f(x_{K+1})\le f(x_0) - F^{\min},\]
where the last inequality results from
Assumption~\ref{assum:lbup}. As result, we obtain
\begin{equation*}
f(x_0) - F^{\min} \ge \sum_{k \in S} f(x_k) - f(x_{k+1}) \ge
\sum_{k \in S} \frac{c}{2} \alpha_k^2.
\end{equation*}
Thus, $\sum_{k \in S} \alpha_k^2 \le \frac{2}{c}(f(x_0) -
F^{min})< \infty$.

To analyze the full series, we consider the set
$S:=\{k_0,k_1,k_2,\dots,\}$, where $k_i \ge 0$ is the index of the
$i$-th successful iteration and $k_0=-1$ is an artificial index
corresponding to $\alpha_{-1} = \gamma^{-1}\alpha_0$. With this
notation, given the updating rules on $\alpha_k$, we have that
\begin{equation*}
\sum_{k=0}^{\infty} \alpha_k^2 = \sum_{i=0}^{\infty}
\sum_{k=k_i+1}^{k_{i+1}} \alpha_k^2 \le \sum_{i=0}^{\infty}
\sum_{k=k_i+1}^{k_{i+1}} \gamma^2 \beta_2^{2(k-k_{i}-1)}
\alpha_{k_i}^2  \le
\frac{\gamma^2}{1-\beta_2^2}\sum_{i=0}^{\infty} \alpha_{k_i}^2.
\end{equation*}
We conclude by observing that
\[\sum_{i=0}^{\infty}\alpha_{k_i}^2 = \gamma^{-2}\alpha_0^2 +
\sum_{k \in S} \alpha_k^2 \le \gamma^{-2}\alpha_0^2 +
\frac{2}{c}(f(x_0) - F^{\min}).\]
\end{proof}

Finally, in the main result of this section, we will prove that
Algorithm~\ref{alg:DS:MO} takes at most
$\mathcal{O}(\epsilon^{-2})$ iterations for driving $\mu$ below
$\epsilon>0$. Similarly to Algorithm~\ref{alg:dms},
Algorithm~\ref{alg:DS:MO} cannot be proven globally convergent to
a Pareto critical point for an arbitrary choice of positive
spanning sets as sets of poll directions, as one can easily
present examples where the cone of descent directions, considering
all components of the objective function, can be arbitrarily
narrow (see  Remark~\ref{rem:newassum}).

\begin{theorem} \label{theo:centralized:wccits}
Let Assumptions~\ref{assum:centralized:pbC11},~\ref{assum:lbup},
and~\ref{assum:newassum} hold. For $\epsilon \in \, ]0,1[$, let
$k_{\epsilon}$ be the first iteration index such that
$\mu_{k_{\epsilon}+1} \le \epsilon$. Then,
\begin{equation*} \label{eq:centralized:wccits}
k_{\epsilon} \; \le \;
\frac{2}{c\alpha_0^2}\left(f(x_0)-F^{\min}\right) +
\frac{\Omega(L_{\max}\,+c)^2(\mathcal{C}_1+1)^2}{4\beta_1^2}\epsilon^{-2},
\end{equation*}
where $\Omega$ is defined as in
Lemma~\ref{lemma:centralized:cvalpha}.
\end{theorem}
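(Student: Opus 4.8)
The plan is to run the by-now-standard direct-search counting argument, splitting the iterations $0,1,\dots,k_\epsilon$ into unsuccessful ones and successful ones and bounding each family separately; the only real subtlety is the treatment of successful iterations, and this is precisely where the two terms of the stated bound come from.

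First I would record the basic criticality fact: by the definition of $k_\epsilon$, we have $\mu_k>\epsilon$ for every $k\in\{0,\dots,k_\epsilon\}$. Combining this with Assumption~\ref{assum:newassum}, written as $\mu_k\le(\mathcal{C}_1+1)\mu_{D_k}$, and with Lemma~\ref{lemma:centralized:unsucc}, every \emph{unsuccessful} iteration $k\le k_\epsilon$ satisfies $\epsilon<\mu_k\le(\mathcal{C}_1+1)\tfrac12(L_{\max}+c)\alpha_k$, hence
\[
\alpha_k \; > \; \bar\alpha \; := \; \frac{2\epsilon}{(\mathcal{C}_1+1)(L_{\max}+c)}.
\]

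Next I would handle successful iterations by distinguishing two cases. Let $S_1$ be the set of successful iterations occurring before the first unsuccessful iteration, and $S_2$ the remaining successful iterations with index $\le k_\epsilon$; let $U$ be the set of unsuccessful iterations with index $\le k_\epsilon$. For $k\in S_1$ the stepsize update rule (a successful iteration never decreases $\alpha$) gives $\alpha_k\ge\alpha_0$, so the sufficient-decrease inequality $f(x_k)-f(x_{k+1})\ge\tfrac{c}{2}\alpha_k^2$, summed over $S_1$ and telescoped exactly as in the proof of Lemma~\ref{lemma:centralized:cvalpha}, yields $|S_1|\le\frac{2}{c\alpha_0^2}(f(x_0)-F^{\min})$, which is the first term. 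For $k\in S_2$, let $k'<k$ be the last unsuccessful iteration before $k$; then all of $k'+1,\dots,k$ are successful, so $\alpha_k\ge\alpha_{k'+1}\ge\beta_1\alpha_{k'}$, and since $k'<k_\epsilon$ we have $\alpha_{k'}>\bar\alpha$, hence $\alpha_k>\beta_1\bar\alpha$; the same lower bound $\beta_1\bar\alpha$ holds a fortiori for every $k\in U$ (as $\beta_1<1$). Therefore, by Lemma~\ref{lemma:centralized:cvalpha},
\[
|S_2\cup U|\,(\beta_1\bar\alpha)^2 \; \le \; \sum_{k\in S_2\cup U}\alpha_k^2 \; \le \; \sum_{k=0}^{\infty}\alpha_k^2 \; \le \; \Omega,
\]
so $|S_2\cup U|\le\Omega/(\beta_1\bar\alpha)^2=\frac{\Omega(L_{\max}+c)^2(\mathcal{C}_1+1)^2}{4\beta_1^2}\epsilon^{-2}$, the second term. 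Since $S_1$, $S_2$, $U$ partition $\{0,\dots,k_\epsilon\}$, we get $k_\epsilon+1=|S_1|+|S_2\cup U|$ and the bound follows.

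The main obstacle — and the step that deserves the most care — is the lower bound on $\alpha_k$ at successful iterations: unlike an unsuccessful iteration, a successful one is not directly covered by Lemma~\ref{lemma:centralized:unsucc}, so one must backtrack to the most recent unsuccessful iteration and use that the stepsize is non-decreasing along the intervening run of successful iterations, losing only the single factor $\beta_1$ incurred at that unsuccessful step. The separate bookkeeping of the initial block $S_1$, where no such backtracking target exists, is exactly what produces the $\alpha_0$-dependent first term.
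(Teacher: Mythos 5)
Your proposal is correct and follows essentially the same route as the paper's proof: a lower bound $\alpha_k \gtrsim \epsilon/((\mathcal{C}_1+1)(L_{\max}+c))$ at unsuccessful iterations via Lemma~\ref{lemma:centralized:unsucc} and Assumption~\ref{assum:newassum}, backtracking from each later successful iteration to the most recent unsuccessful one at the cost of a factor $\beta_1$, the summability bound $\sum_k \alpha_k^2 \le \Omega$ from Lemma~\ref{lemma:centralized:cvalpha} to count everything after the first unsuccessful iteration, and the telescoped sufficient-decrease argument with $\alpha_k \ge \alpha_0$ to bound the initial all-successful block (which the paper dismisses as ``trivially shown''). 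Your bookkeeping even yields the marginally sharper bound on $k_\epsilon+1$ rather than $k_\epsilon$, so the stated result follows.
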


\begin{proof}
If $k_{\epsilon}=0$ the result trivially holds. Therefore, we
assume in what follows that $k_{\epsilon} > 0$.

For any unsuccessful iteration of index $k \le k_{\epsilon}$, we
have from Lemma~\ref{lemma:centralized:unsucc} that
\begin{eqnarray} \label{eq:centralized:unsucceps:1}
\alpha_k^2 \;  &\ge & \; \frac{4\mu_{D_k}^2}{(L_{\max}\,+c)^2}.
\end{eqnarray}
Since $\mu_k>\epsilon$, in view of~(\ref{eq:relation:mu:muD}), we
have $\mu_{D_k} > \epsilon / (\mathcal{C}_1+1)$. Therefore, using
~(\ref{eq:centralized:unsucceps:1}), we have
\begin{equation}\label{eq:centralized:unsucceps}
\alpha_k^2 \; \ge \;
\frac{4\epsilon^2}{(L_{\max}\,+c)^2(\mathcal{C}_1+1)^2}.
\end{equation}

Considering the updating rules on the stepsize, for any successful
iteration of index $k_{\epsilon}\geq k > j_1$, where $j_1$ is the
index of the first unsuccessful iteration, there exists an index
of an unsuccessful iteration $j(k) \le k$ (with possibly
$j(k)=j_1$) such that $\alpha_k \ge \beta_1 \alpha_{j(k)}$.
Putting this together with~\eqref{eq:centralized:unsucceps}
yields:
\begin{equation*}
\forall k \in S, k_{\epsilon} \; \geq \;  k > j_1, \quad
\alpha_k^2 \ge
\frac{4\beta_1^2\epsilon^2}{(L_{\max}\,+c)^2(\mathcal{C}_1+1)^2},
\end{equation*}where $S$ denotes the set of successful iterations.
Using now the result of Lemma~\ref{lemma:centralized:cvalpha}, we
have:
\begin{equation*}
\Omega \; \ge \; \sum_{k=0}^\infty \alpha_k^2 \; \ge \;
\sum_{k=j_1+1}^{k_{\varepsilon}} \alpha_k^2 \; \ge \;
(k_{\varepsilon}-j_1)\frac{4\beta_1^2\epsilon^2}{(L_{\max}\,+c)^2(\mathcal{C}_1+1)^2}.
\end{equation*}
Thus
\begin{equation*}
k_{\varepsilon}-j_1 \; \le \;
\frac{\Omega(L_{\max}\,+c)^2(\mathcal{C}_1+1)^2}{4\beta_1^2}\epsilon^{-2}.
\end{equation*}
Since $j_1$ is the index of the first unsuccessful iteration, one
can trivially  show that $j_1 \le
\frac{2}{c\alpha_0^2}\left(f(x_0)-F^{\min}\right) $. Then, the
thesis follows. 
\end{proof}

The previous theorem allows us to establish a WCC bound in terms
of the number of function evaluations for
Algorithm~\ref{alg:DS:MO} (see also Remark~\ref{rem:numfunc}).

\begin{corol}
Let all the assumptions of Theorem~\ref{theo:centralized:wccits}
hold. To achieve\linebreak $\mu_k < \epsilon$,
Algorithm~\ref{alg:DS:MO} takes at most $\mathcal{O}\left(n
\epsilon^{-2}\right)$ function evaluations.
\end{corol}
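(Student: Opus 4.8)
The plan is to obtain the bound by composing the iteration-complexity estimate of Theorem~\ref{theo:centralized:wccits} with a count of the number of function evaluations performed per iteration. First I would invoke Theorem~\ref{theo:centralized:wccits}: if $k_\epsilon$ denotes the first index with $\mu_{k_\epsilon+1}\le\epsilon$, then
\[
k_\epsilon \;\le\; \frac{2}{c\alpha_0^2}\left(f(x_0)-F^{\min}\right) + \frac{\Omega(L_{\max}+c)^2(\mathcal{C}_1+1)^2}{4\beta_1^2}\,\epsilon^{-2}\;=\;\mathcal{O}\left(\epsilon^{-2}\right),
\]
because the first summand and all the quantities $c,\alpha_0,f(x_0),F^{\min},\Omega,L_{\max},\mathcal{C}_1,\beta_1$ are independent of $\epsilon$. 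Hence at most $\mathcal{O}(\epsilon^{-2})$ iterations are performed before $\mu_k<\epsilon$ is reached.

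Next I would bound the cost of a single iteration. In Algorithm~\ref{alg:DS:MO} the only evaluations of $F$ take place in the poll step, where $F$ is evaluated at the poll points $\{x_k+\alpha_k d:\,d\in D_k\}$; even in the worst case (no decrease direction found, or one found only with the last direction tried) this amounts to $|D_k|$ evaluations of $F$, where by Remark~\ref{rem:numfunc} we count one evaluation of the vector-valued $F$ per poll point. Since each $D_k$ is drawn from the fixed finite family $\mathcal{D}$ of positive spanning sets and, as is standard in directional direct-search, these positive spanning sets are taken to have $\mathcal{O}(n)$ elements (for instance maximal positive bases, with $2n$ directions, or minimal ones, with $n+1$), there is a constant $\kappa$ with $|D_k|\le\kappa n$ for every $k$.

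Finally I would multiply the two estimates: the total number of function evaluations up to iteration $k_\epsilon$ is at most $\sum_{k=0}^{k_\epsilon}|D_k|\le(k_\epsilon+1)\kappa n=\mathcal{O}\left(n\epsilon^{-2}\right)$, which is exactly the assertion. I do not expect a real obstacle here: the argument is a routine composition of the $\mathcal{O}(\epsilon^{-2})$ iteration bound with a per-iteration cost that is linear in $n$, and the only point needing an explicit word is the bound $|D_k|=\mathcal{O}(n)$, which is part of the standing setup for directional direct-search (bounded poll directions drawn from positive spanning sets of controlled cardinality).
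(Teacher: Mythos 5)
Your argument is correct and is exactly the (implicit) reasoning behind the corollary in the paper: multiply the $\mathcal{O}(\epsilon^{-2})$ iteration bound of Theorem~\ref{theo:centralized:wccits} by the per-iteration polling cost of $|D_k|=\mathcal{O}(n)$ evaluations of $F$, counted as in Remark~\ref{rem:numfunc}. Your explicit remark that the positive spanning sets are assumed to have $\mathcal{O}(n)$ elements is a point the paper leaves tacit (and which can be strained if many rotated bases are added to enforce Assumption~\ref{assum:newassum}, cf.\ Remark~\ref{rem:newassum}), so making it explicit is a mild improvement rather than a deviation.
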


\section{Conclusions}\label{sec_conc}
In this work, we analyzed the worst-case complexity of some
direct-search derivative-free algorithms for unconstrained
multiobjective nonconvex smooth optimization problems. In the case
of Direct Multisearch~\cite{ALCustodio_et_al_2011}, we derived a
complexity bound of $\mathcal{O}(|L(\epsilon)|\epsilon^{-2m})$ for
driving a criticality measure below $\epsilon>0$. We then proposed
a min-max approach to the multiobjective derivative-free
optimization problem, which proved to be a particular instance of
Direct Multisearch, but presented a worst-case complexity bound of
$\mathcal{O}(\epsilon^{-2})$ for driving the same criticality
measure below $\epsilon>0$. This result is identical, in terms of
$\epsilon$, to the one established
in~\cite{JFliege_AIFVaz_LNVicente_2019} for gradient descent,
considering the same class of problems. For the (strongly) convex
case, where all the components of the objective function are
(strongly) convex, it remains as an open question whether similar
complexity bounds to those derived
in~\cite{JFliege_AIFVaz_LNVicente_2019} could be established for
the algorithms considered in this paper.

\section*{Acknowledgments}
{\small The authors would like to thank the three anonymous
referees, whose comments and suggestions much improved the quality
of the paper.}

\bibliographystyle{spmpsci_unsrt}
\bibliography{biblio_ds_moo}
\end{document}